\pgfplotsset{compat=1.14}
\theoremstyle{plain}
\newtheorem{theorem}{Theorem}
\newtheorem{lemma}{Lemma}
\theoremstyle{definition}
\newtheorem{definition}{Definition}
\newtheorem{assumption}{Assumption}
\renewcommand{\qedsymbol}{$\triangle$}\examplex}
\endexamplex\vspace{2ex}}
\setlist{align=right,labelindent=0.2em,labelwidth=1.5em,labelsep*=0.6em,leftmargin=!,topsep=0ex,partopsep=0ex,parsep=0ex,itemsep=0ex}
\setlist[itemize,1]{label=\raisebox{0.0em}{\scalebox{1.0}{$\bullet$}}}
\setlist[itemize,2]{label=\raisebox{0.1em}{\scalebox{0.5}{$\blacksquare$}}}
\setlist[itemize,3]{label=\raisebox{0.1em}{\scalebox{0.7}{$\blacktriangleright$}}}
\setlist[itemize,4]{label=\raisebox{0.1em}{\scalebox{0.6}{$\blacklozenge$}}}
\setlist[enumerate]{label=(\alph*)}
\renewcommand{\P}{\ensuremath{\mathbb{P}}}
\newcommand{\E}{\ensuremath{\mathbb{E}}}
\newcommand{\R}{\ensuremath{\mathbb{R}}}
\newcommand{\bigO}{\ensuremath{\mathcal{O}}}
\newcommand{\smallO}{\ensuremath{o}}
\newcommand{\1}[1]{\ensuremath{\mathbbm{1}{\raisebox{-0.2ex}{\hspace{-0.1em}\scriptsize\{$#1$\}}}}}
\DeclareMathOperator{\lambertW}{\scalebox{0.85}{$\mathcal{W}$}}
\newcommand{\w}{W}
\newcommand{\wt}{\widetilde{\w}}
\renewcommand{\sf}[1][n]{\lambda_{#1}}
\renewcommand{\rm}[2][n]{c_{{#1},{#2}}}
\newcommand{\cn}{\omega}
\newcommand{\tcn}[1][n]{\cn_{#1}}
\newcommand{\te}[1][n]{\mathcal{T}_{#1,\delta}}
\newcommand{\teinc}[1][n]{\mathcal{T}_{#1,\eta}}
\newcommand{\rgm}{\mathbb{G}({n}; {\w}, {\sf})}
\newcommand{\hb}{t}
\renewcommand{\epsilon}{\varepsilon}
\renewcommand{\phi}{\varphi}
\title{Cliques in rank-1 random graphs:\\the role of inhomogeneity}
\author[{}\hspace{0.5pt}\protect\hyperlink{hyp:affil1}{a},\protect\hyperlink{hyp:email1}{1}]{\protect\hypertarget{hyp:author1}{Kay Bogerd}}
\author[{}\hspace{0.5pt}\protect\hyperlink{hyp:affil1}{a},\protect\hyperlink{hyp:email2}{2}]{\protect\hypertarget{hyp:author2}{Rui M. Castro}}
\author[{}\hspace{0.5pt}\protect\hyperlink{hyp:affil1}{a},\protect\hyperlink{hyp:email3}{3}]{\protect\hypertarget{hyp:author3}{Remco van der Hofstad}}
\affil[ ]{\small\textsuperscript{\protect\hypertarget{hyp:affil1}{a}}\hspace{0.5pt}Eindhoven University of Technology}
\affil[ ]{\small{}
\textsuperscript{\protect\hypertarget{hyp:email1}{1}}\hspace{0.5pt}\texttt{\footnotesize\href{mailto:k.m.bogerd@tue.nl}{k.m.bogerd@tue.nl}},
\textsuperscript{\protect\hypertarget{hyp:email2}{2}}\hspace{0.5pt}\texttt{\footnotesize\href{mailto:rmcastro@tue.nl}{rmcastro@tue.nl}},
\textsuperscript{\protect\hypertarget{hyp:email3}{3}}\hspace{0.5pt}\texttt{\footnotesize\href{mailto:r.w.v.d.hofstad@tue.nl}{r.w.v.d.hofstad@tue.nl}}}
\date{\today}
\begin{document}
\maketitle

\begin{abstract}
We study the asymptotic behavior of the clique number in rank-1 inhomogeneous random graphs, where edge probabilities between vertices are roughly proportional to the product of their vertex weights. We show that the clique number is concentrated on at most two consecutive integers, for which we provide an expression. Interestingly, the order of the clique number is primarily determined by the overall edge density, with the inhomogeneity only affecting multiplicative constants or adding at most a $\log\log(n)$ multiplicative factor. For sparse enough graphs the clique number is always bounded and the effect of inhomogeneity completely vanishes.

\vspace{0.5\baselineskip}
{\noindent
MSC 2010 subject classifications: {05C69}, {05C80}, {60C05}, {60F}.}

{\noindent
Keywords: {clique number}, {inhomogeneous random graphs}, {Erd\H{o}s-R\'{e}nyi random graphs}.}
\end{abstract}

\section{Introduction}
\label{sec:introduction}
The clique number of a graph $G$ is the size of the largest clique (i.e.\ the largest complete subgraph) in $G$.  In an Erd\H{o}s-R\'{e}nyi random graph, edges between vertices are present with the same probability independently of one another. This is sometimes also called the \emph{homogeneous} setting because all edges have the same probability of being included. In this setting, it is well known that the clique number is highly concentrated when the graph has a large number of vertices, meaning that with high probability the clique number takes values in a small interval \cite{Matula1972,Matula1976,Grimmett1975}. In fact, Matula \cite{Matula1972} shows that the clique number converges to one of two consecutive integers, and provides an explicit formula for the asymptotic clique size.

In this work, we are interested in understanding the behavior of the clique number in \emph{inhomogeneous} random graphs, where edges have different occupation probabilities. In such random graphs, the properties of different vertices (e.g.\ their expected degree) can be radically different and can take a wide range of values. This is in contrast, for instance, with Erd\H{o}s-R\'{e}nyi random graphs, where degrees can only take values in a relative narrow range.

Our work is in part motivated by the statistical problem of community detection.  Formally, this amounts to testing whether a given graph was obtained by ``planting'' a clique, or dense subgraph, inside a random graph. Arias-Castro and Verzelen \cite{Arias-Castro2014,Arias-Castro2013a} have recently considered this problem with an Erd\H{o}s-R\'{e}nyi random graph as the underlying model. To extend these results to the inhomogeneous setting, one needs a better understanding of cliques in the corresponding null model; thus, studying the clique number in inhomogeneous random graphs is a natural starting point.

\paragraph{Related work.}
Inhomogeneous random graphs have received much attention over the past decade because they more accurately model the network structure observed in many real-world networks. The literature on this subject can be divided into sparse and dense graphs.

In the sparse setting, the edge probabilities decrease with the graph size such that the resulting graph has bounded average degree. This setting was first studied in substantial detail by Bollob\'{a}s, Janson, and Riordan \cite{Bollobas2007} in which the critical value for the existence of the giant component was established, as well as several related fundamental properties of such graphs were derived. This has sparked great interest in this model, see also \cite{VanderHofstad2017,VanderHofstad2018} and the references therein for an overview of recent results.

The dense setting (when the average degree is unbounded) leads to the theory of graphons developed by Lov\'{a}sz and Szegedy \cite{Lovasz2006}. Recently, first order results for the clique number were also obtained for this case by Dole\v{z}al, Hladk\'{y}, and M\'{a}th\'{e} \cite{Dolezal2017}, and further studied by McKinley \cite{McKinley2019}.

Inhomogeneous random graphs with an intermediate density have received less attention, although recently results about connectivity have been obtained by Devroye and Fraiman \cite{Devroye2014}, and the diameter was considered by Fraiman and Mitsche \cite{Fraiman2015}.

A special class of the inhomogeneous random graphs above are the so-called rank-1 random graphs. Here each vertex receives a weight and, conditionally on these weights, edges are present independently with probability equal to the product of their vertex weights. Many well-known random graphs fit this model, such as the Erd\H{o}s-R\'{e}nyi random graph by giving each vertex the same weight or scale-free graphs such as the Chung-Lu, Norros-Reitu, and Generalized random graphs by taking the weights from a power-law \cite{Chung2002, Chung2003, Chung2006, Norros2002, Britton2006}.

\paragraph{Our contribution.}
In this paper we show that the clique number of rank-1 inhomogeneous random graphs is concentrated on at most two consecutive integers, provided that all vertex weights are bounded away from $1$. We provide a single expression for the order of the clique number that is valid for every edge density, bringing together results of both the sparse and dense regimes.

To derive our results we essentially make use of the same methodology as Matula \cite{Matula1972}, namely using the first and second moment methods to obtain, respectively, upper and lower bounds for the clique number. The main contribution here lies in the definition of what we call the typical clique number $\tcn$, which is the point where the clique number concentrates around. This quantity is defined implicitly, and we show that this is indeed a sound definition. Furthermore, the inhomogeneity of these graphs substantially complicates the derivation of the lower bounds, which now requires significantly more effort than for Erd\H{o}s-R\'{e}nyi random graphs.

We find quite different asymptotic behaviors of the clique number depending on the edge density of the graph, although our results are more interesting when the average degree diverges. In sparse graphs, the clique number is always bounded regardless of the ``amount of inhomogeneity'', and the only parameter that affects the asymptotic clique number is the edge density. In dense graphs, the clique number behaves similarly as in an Erd\H{o}s-R\'{e}nyi random graph. Specifically, the clique number is always of order $\log(n)$, with the inhomogeneity only affecting the constants. Interestingly, graphs with intermediate edge density can be rather different, with the inhomogeneity sometimes adding a $\log\log(n)$ multiplicative factor to the clique number.

\section{Main results}
\label{sec:main_results}
In this paper, we consider a random graph model denoted by $\rgm$. This model has three parameters: the number of vertices $n$, the \emph{weight distribution} $\w$, and the \emph{scaling} $\sf$. An element of $\rgm$ is a simple graph $G = (V, E)$ that has $n \in \mathbb{N}$ vertices with vertex set $V = [n] \coloneqq \{1, \ldots, n\}$, and a random edge set $E$. Each vertex $i \in [n]$ is assigned a \emph{weight}, which is an independent copy $\w_i$ of the non-negative random variable $\w \in [0, \infty)$. In other words, $\w_i$ are i.i.d.\ non-negative random variables with the same distribution as $\w$.  Conditionally on these weights, the presence of an edge between two vertices $i,j \in [n]$, with $i \neq j$, is modeled by independent Bernoulli random variables with success probability
\begin{equation}
\label{eq:connection_probability}
p_{i,j} \coloneqq \P\left((i,j) \in E \,\middle|\, (\w_k)_{k \in [n]}\right) = \frac{\w_i}{\sf} \cdot \frac{\w_j}{\sf} \wedge 1 \,,
\end{equation}
where the scaling $\sf : \mathbb{N} \mapsto \mathbb{R}$ is a deterministic sequence. Note that the weights do not depend on the graph size $n$. This is why the introduction of the scaling $\sf$ is useful, as it allows us to naturally control the edge density of the graph. We assume that the scaling is at most of order $\sqrt{n}$, because otherwise we are in the trivial case where the graph is asymptotically almost or completely empty.

Random graph models like the classical Erd\H{o}s-R\'{e}nyi random graphs are \emph{homogeneous} in the sense that for a typical realization the degrees of all vertices tend to take a narrow range of values. Furthermore, all parts of the graph look more or less the same. However, graphs arising in real-world settings do not generally satisfy this property and tend to be \emph{inhomogeneous}, with a relatively wide range of different vertex degrees across the entire graph. In our model the weight distribution $\w$ determines the inhomogeneity of the graph, and the heavier the tails of this distribution the more inhomogeneous the graph is. Recall that the weight distribution is not a function of the graph size and without any scaling factor the resulting graphs are dense (i.e.\ with a number of edges that is quadratic in the graph size $n$). The parameter $\sf$ allows us, therefore, to control the edge density. When $\sf$ is constant, we are in the dense regime. On the other hand, when $\sf \approx \sqrt{n}$, we are in the sparse regime with a number of edges that is linear in $n$, which corresponds to graphs with finite average degree. Choices of $\sf$ in between those extremes lead to graphs of intermediate density with a number of edges more than linear but less than quadratic in $n$.

We are interested in an asymptotic characterization of these graphs as the number of vertices $n$ increases. In this paper, when limits are unspecified, they are taken as the number of vertices $n$ tends to $\infty$. We use standard asymptotic notation. For deterministic sequences $a_n$ and $b_n$, we write $a_n = \bigO(b_n)$ when $a_n / b_n$ is bounded, and $a_n = \smallO(b_n)$ when $a_n / b_n \to 0$. We say that a sequence of events holds with high probability if it holds with probability tending to $1$.

\subsection{The clique number}
\label{subsec:the_clique_number}
Our main contribution is to show that the clique number of a graph $G \sim \rgm$, denoted by $\cn(G)$, is concentrated on at most two consecutive integers provided that the following assumptions hold:
\begin{assumption}
\label{ass:weights_probability_assumption}
There exists $\delta > 0$ such that
\begin{equation}
\P\left(\max_{i \in [n]} \w_i \leq \frac{\sf}{1 + \delta}\right) \to 1 \,,
\qquad \text{as } n \to \infty \,.
\end{equation}
\end{assumption}
This assumption, which seems relatively benign, ensures that all edge probabilities are bounded away from $1$ with high probability. Alternatively, it can be regarded as a restriction on the denseness of the graph, requiring that $\sf$ grows fast enough, which causes the resulting graphs not to become too dense. Our second assumption strengthens the above for large $\sf$.

\begin{assumption}
\label{ass:weights_probability_assumption_increasing_delta}
If $\liminf_{n \to \infty} \log(\sf) / \log(n) > 0$, then for every $\eta > 0$
\begin{equation}
\P\left(\max_{i \in [n]} \w_i \leq \frac{\sf}{1 + \eta}\right) \to 1 \,,
\qquad \text{as } n \to \infty \,.
\end{equation}
\end{assumption}
Note that this assumption is only a restriction when the scaling is large (i.e. when $\sf$ is a positive power of $n$). Moreover, in many cases, Assumption~\ref{ass:weights_probability_assumption_increasing_delta} is a direct consequence of Assumption~\ref{ass:weights_probability_assumption} (e.g.\ when all or enough moments of $\w$ are finite, or when $\w$ has a regularly-varying distribution). We only need Assumption~\ref{ass:weights_probability_assumption_increasing_delta} to eliminate some pathological cases. This issue is discussed in more detail in Section~\ref{sec:discussion_and_overview}.

The clique number in graphs from our model depends both on the amount of inhomogeneity (captured by $\w$) and the average edge density (which is close to $(\E[W]/\sf)^2$). Under Assumptions \ref{ass:weights_probability_assumption} and \ref{ass:weights_probability_assumption_increasing_delta}, it turns out that the relation between the conditional moments of the weights fully characterizes the asymptotic clique number. To simplify notation define the \emph{truncated weight} $\wt$ as the random variable with distribution
\begin{equation}
\label{eq:truncated_weight}
\P\left(\wt \leq x\right)
  = \P\left(\w \leq x \,\middle|\, \w \leq \frac{\sf}{1 + \delta}\right)\,,
\qquad\text{for all } x \in \R \,,
\end{equation}
where $\delta > 0$ comes from Assumption~\ref{ass:weights_probability_assumption}. In other words, the distribution of $\wt$ is the conditional distribution of $\w$ given $\w \leq \sf / (1 + \delta)$. The relative truncated moments (abbreviated  to relative moments in the rest of the paper) are defined as follows:
\begin{definition}[Relative moments]
\label{def:relative_moments}
Given a weight $\w$ and scaling $\sf$, the $r$-th relative moment is defined by
\begin{equation}
\label{eq:relative_moments}
\rm{r}
  = \frac{\E\bigl[\w^r\,\big|\,\w \leq \frac{\sf}{1 + \delta}\bigr]}{\E\bigl[\w\,\big|\,\w \leq \frac{\sf}{1 + \delta}\bigr]^r}
  = \frac{\E\bigl[\wt^r\bigr]}{\E\bigl[\wt\bigr]^r}\,.
\end{equation}
\end{definition}

Note that the relative moments $\rm{r}$ depend on the graph size $n$, but only through the scaling $\sf$. To avoid notational clutter, we often omit the explicit dependence of $\rm{r}$ on $\delta$.

Building towards our result stating that the clique number is highly concentrated, we define next the \emph{typical clique number}. Unfortunately, the typical clique number depends in a cumbersome way on the relative moments $\rm{r-1}$. Therefore, it is only possible to give an implicit characterization in the general setting.
\begin{definition}[Typical clique number]
\label{def:typical_clique_number}
Let $\tcn \in [1, \infty)$ denote the solution in $r \geq 1$ of
\begin{equation}
\label{eq:typical_clique_number}
r = \frac{\log(n) - \log(r) + \log(\rm{r-1}) + 1}{\log\bigl(\sf \,/\, \E\bigl[\wt\bigr]\bigr)} + 1 \ .
\end{equation}
We call $\tcn$ the typical clique number of $\rgm$.
\end{definition}

Note that the typical clique number $\tcn$ needs not be an integer. Also, it is not immediately obvious that $\tcn$ is well defined because there could either be no solution or \eqref{eq:typical_clique_number} might have multiple solutions. However, the following lemma shows that the typical clique number $\tcn$ is well defined:
\begin{lemma}
\label{lem:typical_clique_number_existence}
Under Assumption~\ref{ass:weights_probability_assumption} the typical clique number $\tcn$ from Definition~\ref{def:typical_clique_number} exists and is unique.
\end{lemma}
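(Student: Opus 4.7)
The strategy is to rewrite \eqref{eq:typical_clique_number} as a fixed-point equation $g(r)=r$ on $[1,\infty)$ and verify that the continuous function $f(r)\coloneqq g(r)-r$ is strictly decreasing with $f(1)>0$ and $f(r)\to-\infty$; the intermediate value theorem then yields a unique zero, which is $\tcn$. Explicitly, set
\[
L\coloneqq\log\bigl(\sf/\E[\wt]\bigr),\qquad g(r)\coloneqq\frac{\log(n)-\log(r)+\log(\rm{r-1})+1}{L}+1.
\]
A preliminary observation is that $L>0$: by construction $\wt\leq \sf/(1+\delta)$ almost surely, hence $\E[\wt]\leq\sf/(1+\delta)$ and therefore $L\geq\log(1+\delta)>0$. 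Moreover, the boundedness of $\wt$ implies via dominated convergence that $r\mapsto\E[\wt^{r-1}]$ is continuous (indeed real-analytic) on $[1,\infty)$, so $g$ is continuous on that interval.

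For existence, I would evaluate $f$ at the endpoints. At $r=1$ we have $\rm{0}=\E[\wt^{0}]/\E[\wt]^{0}=1$, so
\[
f(1)=g(1)-1=\frac{\log(n)+1}{L}>0.
\]
For the behavior as $r\to\infty$, the almost-sure bound $\wt\leq\sf/(1+\delta)$ yields $\E[\wt^{r-1}]\leq(\sf/(1+\delta))^{r-1}$ and therefore
\[
\log(\rm{r-1})\leq(r-1)\bigl(L-\log(1+\delta)\bigr).
\]
Plugging this into $g$ gives
\[
f(r)\leq\frac{\log(n)-\log(r)+1}{L}-\frac{(r-1)\log(1+\delta)}{L}\longrightarrow-\infty,
\]
since the second term diverges linearly while the first grows only logarithmically in $r$. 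Continuity of $f$ and the intermediate value theorem produce at least one $\tcn\in[1,\infty)$ with $f(\tcn)=0$.

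For uniqueness I would show $f$ is strictly decreasing by computing its derivative. Using $\log(\rm{r-1})=\log\E[\wt^{r-1}]-(r-1)\log\E[\wt]$ and differentiating under the expectation (again justified by the boundedness of $\wt$),
\[
\frac{d}{dr}\log(\rm{r-1})=\frac{\E[\wt^{r-1}\log\wt]}{\E[\wt^{r-1}]}-\log\E[\wt]\leq\log\!\Bigl(\frac{\sf}{1+\delta}\Bigr)-\log\E[\wt]=L-\log(1+\delta),
\]
where the inequality uses $\wt\leq\sf/(1+\delta)$ a.s. Consequently
\[
f'(r)=\frac{-1/r+\tfrac{d}{dr}\log(\rm{r-1})}{L}-1\leq-\frac{1/r+\log(1+\delta)}{L}<0,
\]
so $f$ is strictly decreasing on $[1,\infty)$, which forces the zero to be unique.

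The step I expect to be most delicate is the differentiation of $\log\E[\wt^{r-1}]$ near $r=1$ in the degenerate case where $\P(\wt=0)>0$, because then $\log\wt$ is not integrable against the trivial measure at $r=1$. I would bypass this by restricting the monotonicity argument to $r>1$ (where $\wt^{r-1}\log\wt$ is bounded on $\{\wt=0\}$ by the convention $0\cdot\log 0=0$) and combining it with the strict inequality $f(1)>0$ to still conclude uniqueness on the whole of $[1,\infty)$.
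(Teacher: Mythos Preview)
Your proof is correct and follows essentially the same route as the paper: both establish continuity of the right-hand side of \eqref{eq:typical_clique_number}, use boundary values together with the intermediate value theorem for existence, and bound the derivative of $\log(\rm{r-1})$ via $\wt\le\sf/(1+\delta)$ to force the slope below~$1$ for uniqueness. The only cosmetic differences are that the paper checks $f_n(n)\le n$ at a finite endpoint rather than sending $r\to\infty$, and that you are a bit more careful than the paper about the differentiability at $r=1$ when $\P(\wt=0)>0$.
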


The following theorem is our main result and shows that asymptotically almost all graphs generated by our model have a clique number that differs at most one from the typical clique number $\tcn$:
\begin{theorem}
\label{thm:clique_number_two_point_concentration}
Let $\epsilon > 0$ be arbitrary. Under Assumptions \ref{ass:weights_probability_assumption} and \ref{ass:weights_probability_assumption_increasing_delta} the clique number $\cn(G_n)$ of a random graph $G_n \sim \rgm$ satisfies
\begin{equation}
\lim_{n \to \infty} \P\left(\cn(G_n) \in \bigl[\lfloor \tcn - \epsilon \rfloor, \lfloor \tcn + \epsilon \rfloor\bigr]\right) = 1 \,,
\end{equation}
where $\tcn$ is the typical clique number from Definition~\ref{def:typical_clique_number}.
\end{theorem}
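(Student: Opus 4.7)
The plan is to follow the classical Matula strategy, bounding the number $X_r$ of $r$-cliques in $G_n$ via the first and second moment methods. By Assumption~\ref{ass:weights_probability_assumption} the event $E_\delta = \{\max_{i \in [n]} \w_i \leq \sf / (1 + \delta)\}$ holds with high probability; on $E_\delta$ every edge probability equals $\w_i \w_j / \sf^2$ (no truncation at $1$ is needed) and the weights are distributed as i.i.d.\ copies of $\wt$. It therefore suffices to control the clique count in this ``truncated'' graph, modulo an additive cost $\P(E_\delta^c) = \smallO(1)$.

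For the \emph{upper bound} I would take $r^+ = \lfloor \tcn + \epsilon \rfloor + 1$, so that $r^+ > \tcn$ uniformly in $n$ with a strictly positive gap. Since each vertex of a fixed $r^+$-subset contributes its weight to the power $r^+ - 1$ in the conditional clique probability, the i.i.d.\ factorisation gives
\begin{equation*}
\E\bigl[X_{r^+} \1{E_\delta}\bigr] \leq \binom{n}{r^+} \frac{\E[\wt^{r^+ - 1}]^{r^+}}{\sf^{r^+ (r^+ - 1)}} \,.
\end{equation*}
Substituting $\E[\wt^{r^+ - 1}] = \rm{r^+ - 1} \E[\wt]^{r^+ - 1}$, applying Stirling to $\binom{n}{r^+}$, and pulling out a factor $r^+$ reduces the task to verifying that
\begin{equation*}
\log(n) - \log(r) + \log(\rm{r - 1}) + 1 - (r - 1) \log\bigl(\sf / \E[\wt]\bigr) < 0
\end{equation*}
at $r = r^+$. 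By Definition~\ref{def:typical_clique_number} and the uniqueness statement in Lemma~\ref{lem:typical_clique_number_existence}, this expression vanishes at $r = \tcn$ and (by the implicit monotonicity underlying that uniqueness) is strictly negative for $r > \tcn$; Markov's inequality together with $\P(E_\delta^c) \to 0$ then yields $\P(\cn(G_n) \geq r^+) \to 0$.

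For the \emph{lower bound} I would set $r^- = \lfloor \tcn - \epsilon \rfloor$ and apply Paley--Zygmund to the clique count $\tilde{X}_{r^-}$ in the graph with i.i.d.\ weights $\wt$. Decomposing $\E[\tilde{X}_{r^-}^2]$ according to the intersection size $k$ of the two participating $r^-$-subsets gives
\begin{equation*}
\frac{\E[\tilde{X}_{r^-}^2]}{\E[\tilde{X}_{r^-}]^2} = \sum_{k=0}^{r^-} \frac{\binom{r^-}{k} \binom{n - r^-}{r^- - k}}{\binom{n}{r^-}} \cdot \biggl(\frac{\E[\wt^{2 r^- - k - 1}]}{\E[\wt^{r^- - 1}]^2}\biggr)^{\!k} \sf^{k(k - 1)} \,.
\end{equation*}
The $k = 0$ term tends to $1$, the $k = 1$ term is $\bigO((r^-)^2 / n) = \smallO(1)$ since $r^- \ll \sqrt{n}$, and the whole question reduces to showing that the sum over $k \geq 2$ is $\smallO(1)$.

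The hard part will be exactly this second-moment estimate. The delicate factor is $\E[\wt^{2 r^- - k - 1}]$, which inhomogeneity can inflate by placing mass of $\wt$ near the truncation level $\sf / (1 + \delta)$. The deterministic bound $\wt \leq \sf / (1 + \delta)$ yields $\E[\wt^{2 r^- - k - 1}] \leq (\sf / (1 + \delta))^{r^- - k} \E[\wt^{r^- - 1}]$; plugging the defining equation of $\tcn$ back in to eliminate $\sf^{r^- - 1} / \E[\wt^{r^- - 1}]$ then rewrites each $k$-th term as a geometric-type expression decaying in $k$, provided the truncation constant is sufficiently large. When $\sf$ is a positive power of $n$ (the dense regime), the factor $\sf^{k(k - 1)}$ is polynomial in $n$ of arbitrarily high degree and is only beaten by replacing $(1 + \delta)^{-(r^- - k)}$ with $(1 + \eta)^{-(r^- - k)}$ for arbitrarily large $\eta$; this is precisely what Assumption~\ref{ass:weights_probability_assumption_increasing_delta} provides, justifying a refinement of the truncation. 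Once the sum over $k \geq 2$ is shown to be $\smallO(1)$, Paley--Zygmund gives $\P(\tilde{X}_{r^-} \geq 1) \to 1$, and a coupling of $G_n$ and the truncated graph on $E_\delta$ transfers this to $\P(\cn(G_n) \geq r^-) \to 1$, completing the two-point concentration.
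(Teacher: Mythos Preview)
Your overall plan is the same as the paper's (first/second moment on the graph conditioned on the truncation event), but two steps do not go through as written.

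\textbf{Upper bound when $\tcn$ is bounded.} The monotonicity behind Lemma~\ref{lem:typical_clique_number_existence} indeed gives $g(r^+)\le -\epsilon\log(1+\delta)$, where $g$ is your displayed expression, and hence $\E[X_{r^+}\mid E_\delta]\le (1+\smallO(1))\,(1+\delta)^{-\epsilon r^+}/\sqrt{2\pi r^+}$. But this tends to $0$ only when $r^+\to\infty$. If $\sf=n^{\alpha+\smallO(1)}$ for some $\alpha\in(0,1)$ then, by Lemma~\ref{lem:typical_clique_number_bounded}, the typical clique number converges to $1+1/\alpha$ and $r^+$ stays bounded, so your first-moment bound is a fixed constant smaller than $1$, not $\smallO(1)$. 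The paper handles this by conditioning instead on $\{\max_i W_i\le \sf/(1+\eta)\}$ for arbitrarily large $\eta$, which Assumption~\ref{ass:weights_probability_assumption_increasing_delta} makes a high-probability event precisely in this regime; then $(1+\eta)^{-\epsilon r^+}$ can be made as small as desired. You invoke Assumption~\ref{ass:weights_probability_assumption_increasing_delta} only in the lower bound; it is equally indispensable here.

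\textbf{The $k=1$ term and the elimination of $\E[\wt^{r^- -1}]$.} The claim that the $k=1$ summand is $\bigO((r^-)^2/n)$ implicitly assumes $\E[\wt^{2r^- -2}]\big/\E[\wt^{r^- -1}]^2=\bigO(1)$. This holds for Erd\H{o}s--R\'enyi but fails in the inhomogeneous case: that ratio can be of order $n/r^-$, cancelling the combinatorial factor entirely. The $k=1$ term therefore needs the same crude bound $\wt\le\sf/(1+\delta)$ that you apply for $k\ge 2$; there is no shortcut. Relatedly, ``plugging the defining equation of $\tcn$ back in'' is not enough, because $r^-\ne\tcn$: what is required is a quantitative \emph{lower} bound on $\rm{r^- -1}$ valid for $r^-<\tcn$ (and a matching \emph{upper} bound on $\rm{2r^- -k-1}$ for $2r^- -k\ge\tcn$). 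The paper isolates these as a separate lemma (Lemma~\ref{lem:relative_moments_bound}), obtained by pushing the slope estimate from the uniqueness proof one step further to a two-sided linear envelope for $r\mapsto\log(\rm{r-1})$; this is the quantitative input that actually drives both halves of the argument.
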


It is important to note that the typical clique number $\tcn$ depends on the $\delta$ from Assumption~\ref{ass:weights_probability_assumption} through the behavior of the truncated weights $\wt$. This might give the impression that the clique number $\cn(G_n)$ of a graph $G_n$ must also depends on $\delta$, which it obviously does not. However, provided $\delta$ is small enough to ensure that Assumption~\ref{ass:weights_probability_assumption} holds, the dependence of the typical clique number $\tcn$ on $\delta$ vanishes. This is further discussed in Section~\ref{sec:discussion_and_overview} below.

Theorem~\ref{thm:clique_number_two_point_concentration} shows that the clique number converges to at most one of two possible values with high probability, provided we take $\epsilon < 1/2$. This shows two-point concentration of the clique number for rank-1 inhomogeneous random graphs. To find the explicit values of these two points, we need to find an explicit solution of \eqref{eq:typical_clique_number}, which is generally difficult, see Appendix~\ref{sec:derivation_of_examples} for the details. To facilitate this, we give two alternative asymptotic characterizations of the typical clique number $\tcn$.
\begin{lemma}
\label{lem:typical_clique_number_alternative}
Under Assumption~\ref{ass:weights_probability_assumption} the typical clique number $\tcn$ is equal to the solution in $r$ of
\begin{equation}
\label{eq:typical_clique_number_alternative}
r = \log_b\left(n \, \rm{r-1}\right) - \log_b\log_b\left(n \, \rm{r-1}\right) + \log_b\left(e\right) + 1 + \smallO(1) \ ,
\end{equation}
where we abbreviate $b = \sf/\E[\wt]$.
\end{lemma}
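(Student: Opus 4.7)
The plan is to derive \eqref{eq:typical_clique_number_alternative} directly from the implicit definition \eqref{eq:typical_clique_number} by algebra and a careful asymptotic analysis. Multiplying \eqref{eq:typical_clique_number} by $\log b$, with $b := \sf/\E[\wt]$, and rearranging yields the equivalent form
\begin{equation*}
\tcn = \log_b\bigl(n\, \rm{\tcn-1}\bigr) - \log_b \tcn + \log_b(e) + 1.
\end{equation*}
Setting $L_n := \log_b(n\, \rm{\tcn-1})$ and comparing with \eqref{eq:typical_clique_number_alternative}, the claim reduces to establishing
\begin{equation*}
\log_b \tcn - \log_b L_n = o(1).
\end{equation*}
The same rearrangement gives the useful identity $L_n = \tcn + \log_b \tcn - \log_b(e) - 1$, which lets me analyse the ratio $\tcn/L_n$ purely in terms of $\tcn$ and $\log b$.

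I would then split into two regimes. When $\log b = \Theta(1)$ (note that Assumption~\ref{ass:weights_probability_assumption} automatically gives $\log b \geq \log(1+\delta) > 0$), equation \eqref{eq:typical_clique_number} forces $\tcn \gtrsim \log n / \log b \to \infty$, so $\log_b \tcn = O(\log \tcn) = o(\tcn)$ and hence $L_n/\tcn = 1 + o(1)$. Since $\log b$ is bounded below by a positive constant, taking $\log_b$ converts this into $\log_b L_n = \log_b \tcn + o(1)$. In the complementary regime $\log b \to \infty$, the standing constraint $\sf = O(\sqrt{n})$ yields $\log b \leq \tfrac{1}{2}\log n + O(1)$, and \eqref{eq:typical_clique_number} then shows that $\tcn$ is bounded and stays bounded away from $1$. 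Consequently $L_n = \tcn - 1 + o(1)$ lies in a closed interval strictly inside $(0, \infty)$, so both $\log_b \tcn$ and $\log_b L_n$ are $O(1/\log b) = o(1)$, and their difference is $o(1)$.

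Substituting $\log_b \tcn = \log_b L_n + o(1)$ back into the rearranged form of \eqref{eq:typical_clique_number} immediately yields \eqref{eq:typical_clique_number_alternative}.

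The main obstacle is the sparse regime: one must verify that $\tcn$ remains bounded away from $1$, because otherwise $L_n$ could collapse to $0$ and $\log_b L_n$ would blow up. This is delicate, since for $\tcn - 1 \in (0, 1)$ the relative moment $\rm{\tcn-1}$ may drop below $1$ (the map $x \mapsto x^{r-1}$ being concave there), so a crude lower bound on $\tcn - 1$ from \eqref{eq:typical_clique_number} loses information. I would handle this by combining the polynomial bound $\log \rm{r-1} = O(\log n)$, which follows from Assumption~\ref{ass:weights_probability_assumption}, with $\log b \leq \tfrac{1}{2}\log n + O(1)$, to conclude $\tcn - 1 \geq 2 - o(1)$ throughout the sparse regime.
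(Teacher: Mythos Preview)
Your approach is genuinely different from the paper's: rather than recognising \eqref{eq:typical_clique_number} as a Lambert-$W$ equation (the paper rewrites it as $r\log b = \lambertW_0(n\,\rm{r-1}\,e\,b\,\log b)$ and then invokes the standard expansion $\lambertW_0(x)=\log x-\log\log x+o(1)$), you work directly with the rearranged identity $\tcn = L_n - \log_b\tcn + \log_b e + 1$ and argue that $\log_b\tcn$ and $\log_b L_n$ differ by $o(1)$. This is a perfectly reasonable and more elementary route.

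However, your case split is wrong. You claim that $\log b\to\infty$ forces $\tcn$ to be bounded, but this is false: the paper's own Gamma example with scaling $\sf=(1+\phi)\log(n)/\beta$ has $b\asymp\log n$ (so $\log b\to\infty$) yet $\tcn\asymp\log n\to\infty$. Lemma~\ref{lem:typical_clique_number_bounded} says $\tcn$ is bounded precisely when $\sf=n^{\alpha+o(1)}$, i.e.\ when $\log b$ is of order $\log n$, not merely divergent. So the regime ``$\log b\to\infty$ but $\log b=o(\log n)$'' falls through the cracks of your dichotomy.

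The fix is simple and keeps your argument intact: split on whether $\tcn\to\infty$ or $\tcn$ stays bounded (possibly along subsequences). Your Case~1 computation only uses $\log b\ge\log(1+\delta)>0$, not that $\log b$ is bounded above, so it already covers every subsequence with $\tcn\to\infty$: from $L_n=\tcn+\log_b\tcn-\log_b e-1$ you get $L_n/\tcn\to1$ and hence $\log_b(L_n/\tcn)=o(1)$. For the bounded-$\tcn$ subsequences, Lemma~\ref{lem:typical_clique_number_bounded} gives $\tcn\to1+1/\alpha$ with $\alpha\in(0,1)$ and $\log b\sim\alpha\log n\to\infty$, so $\tcn-1\to1/\alpha\ge 2$ (using $\sf=O(\sqrt{n})$), which disposes of your ``$L_n$ collapsing to $0$'' worry without the somewhat vague $\log\rm{r-1}=O(\log n)$ argument you sketch at the end.
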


Note that when the weight distribution is degenerate (i.e.\ has probability 1 on a single point) we obtain an Erd\H{o}s-R\'{e}nyi random graph, and $\rm{r-1} = 1$ for all $r \in \mathbb{N}$. Using Lemma~\ref{lem:typical_clique_number_alternative}, our result in Theorem~\ref{thm:clique_number_two_point_concentration} reduces to the main result in \cite{Matula1972}. On the other hand, when we consider inhomogeneous graphs, Lemma~\ref{lem:typical_clique_number_alternative} shows that we essentially have to rescale the number of vertices $n$ by the relative moments $\rm{r-1}$ to account for the inhomogeneity.

The second characterization pertains the setting where the scaling is such that it gives rise to relatively sparse graphs. In this case, many of the edge probabilities have become so small that the shape of the distribution $\w$ stops playing a role, and the typical clique number $\tcn$ converges to a constant independent of the weight distribution:
\begin{lemma}
\label{lem:typical_clique_number_bounded}
Let $\alpha \in (0, 1)$. Under Assumption~\ref{ass:weights_probability_assumption} the following are equivalent:
\begin{enumerate}[label={(\roman*)}]
\item The scaling satisfies $\sf = n^{\alpha + \smallO(1)}$.
\item The typical clique number satisfies $\tcn = 1 + 1 / \alpha + \smallO(1)$.
\end{enumerate}
\end{lemma}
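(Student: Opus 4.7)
I begin by rewriting the defining equation~\eqref{eq:typical_clique_number} in a more convenient form. Expanding $\log\rm{r-1} = \log\E[\wt^{r-1}] - (r-1)\log\E[\wt]$, the $\log\E[\wt]$ contributions cancel against those in $\log(\sf/\E[\wt])$, leaving
\[
(r-1)\log\sf = \log n - \log r + \log\E[\wt^{r-1}] + 1.
\]
The left-hand side is strictly increasing in $r$ (its derivative equals $\log\sf$ minus the $\wt^{r-1}$-weighted mean of $\log\wt$, and the latter is at most $\log(\sf/(1+\delta)) < \log\sf$), so $\tcn$ is its unique solution.

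The key technical step is the moment bound $\log\E[\wt^{1/\alpha}] = o(\log n)$ under hypothesis (i). Assumption~\ref{ass:weights_probability_assumption} together with iid-ness of the weights yields $\P(\w > \sf/(1+\delta)) = o(1/n)$. Combined with $\sf = n^{\alpha+o(1)}$ and monotonicity of the tail function, this gives $\P(\w > x) = o(x^{-1/\alpha})$ as $x \to \infty$. A layer-cake estimate,
\[
\E[\wt^{1/\alpha}] \leq \frac{1}{\alpha\,\P(\w \leq \sf/(1+\delta))} \int_0^{\sf/(1+\delta)} u^{1/\alpha-1}\,\P(\w > u)\,du,
\]
split at a large threshold above which the tail bound $\P(\w > u) \leq \epsilon u^{-1/\alpha}$ applies, then yields $\E[\wt^{1/\alpha}] \leq C_\epsilon + \epsilon\log n$ for every $\epsilon > 0$, proving $\E[\wt^{1/\alpha}] = o(\log n)$.

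For (i)$\Rightarrow$(ii), substituting $r = 1 + 1/\alpha$ into the reformulated equation makes left- and right-hand sides agree up to an $o(\log n)$ error. To sharpen this to $\tcn = 1 + 1/\alpha + o(1)$, I estimate the slope of the left-hand side at $r = 1 + 1/\alpha$: using a layer-cake split of $\wt$ at $(\sf/(1+\delta))^\theta$ (for any fixed $\theta < 1$) together with the tail bound, one shows the $\wt^{1/\alpha}$-weighted mean of $\log\wt$ is at most $(\theta + o(1))\log\sf$, hence the slope is $\bigTheta(\log n)$. Dividing the $o(\log n)$ error by this slope gives $|\tcn - (1 + 1/\alpha)| = o(1)$. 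For the converse (ii)$\Rightarrow$(i), I argue by subsequences: the paper's standing convention $\sf = O(\sqrt{n})$ combined with a Jensen lower bound in the reformulated equation (applied under hypothesis (ii)) keeps $\log\sf/\log n$ bounded. Along any subsequence with $\log\sf/\log n \to \alpha'$, the already-proven (i)$\Rightarrow$(ii) forces $\tcn \to 1 + 1/\alpha'$, and hypothesis (ii) then gives $\alpha' = \alpha$; so $\log\sf/\log n \to \alpha$ along the full sequence.

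\textbf{Main obstacle.} The trivial bound $\wt \leq \sf/(1+\delta) \approx n^\alpha$ only gives $\log\E[\wt^{1/\alpha}] \leq \log n + O(1)$, one order of magnitude too weak. Leveraging the sub-polynomial decay $\P(\w > x) = o(x^{-1/\alpha})$ uniformly over the integration domain via the layer-cake identity is the technical core, and the same mechanism is needed to control $\E[\wt^{1/\alpha}\log\wt]$ when bounding the slope of the left-hand side away from its trivial upper bound $\log\sf$, which is what closes the $o(\log n)$-to-$o(1)$ gap in the bracketing step of (i)$\Rightarrow$(ii).
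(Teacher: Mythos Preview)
Your reformulation of the defining equation and the subsequence argument for (ii)$\Rightarrow$(i) are both sound (the latter is in fact cleaner than the paper's direct computation). However, the core moment estimate for (i)$\Rightarrow$(ii) has a genuine gap.

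The claim $\P(\w > x) = o(x^{-1/\alpha})$ does \emph{not} follow from Assumption~\ref{ass:weights_probability_assumption} together with $\sf = n^{\alpha+o(1)}$. Take $\w$ Pareto with $\P(\w > x) = x^{-1/\alpha}$ for $x\ge 1$, and $\sf = n^{\alpha}e^{\sqrt{\log n}}$. Then $\log\sf/\log n \to \alpha$, and $n\,\P(\w > \sf/(1+\delta)) = (1+\delta)^{1/\alpha}e^{-\sqrt{\log n}/\alpha}\to 0$, so Assumption~\ref{ass:weights_probability_assumption} holds; yet $x^{1/\alpha}\P(\w>x)\equiv 1$. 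In this example $\E[\wt^{1/\alpha}] = (1+o(1))\,\alpha^{-1}\log(\sf/(1+\delta)) = \Theta(\log n)$, refuting your conclusion $\E[\wt^{1/\alpha}] = o(\log n)$. Your slope estimate fails for the same reason: the $\wt^{1/\alpha}$-weighted mean of $\log\wt$ here equals $\tfrac12\log(\sf/(1+\delta))$, so the bound ``$\le (\theta+o(1))\log\sf$ for any $\theta<1$'' is false, and the layer-cake split at $(\sf/(1+\delta))^\theta$ does not make the upper tail negligible.

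What \emph{is} derivable from the hypotheses is the weaker $\P(\w > x) = o(x^{-1/\alpha+\epsilon'})$ for every fixed $\epsilon'>0$, because eventually $n^{\alpha/(1-\alpha\epsilon')} \ge \sf$ and hence $\P(\w^{1/\alpha-\epsilon'}>n)\le \P(\w>\sf)=o(1/n)$. This is exactly the $\epsilon'$-room the paper exploits in its auxiliary Lemma~\ref{lem:typical_clique_number_bounded_additional}: it bounds $\E[\wt^{1/\alpha-\epsilon'}]=O(\log n)$ by layer-cake, then uses $\E[\wt^{1/\alpha+o(1)}]\le (\sf/(1+\delta))^{2\epsilon'}\E[\wt^{1/\alpha-\epsilon'}]$ to conclude $\log\E[\wt^{1/\alpha+o(1)}]/\log\sf \le 2\epsilon'+o(1)$, hence $o(1)$. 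With this corrected estimate one recovers $f_n(1+1/(\alpha-\epsilon)) = 1+1/\alpha+o(1)$, which combined with the Jensen lower bound at $1+1/(\alpha+\epsilon)$ brackets $\tcn$ in $(1+1/(\alpha+\epsilon),\,1+1/(\alpha-\epsilon))$ for every $\epsilon>0$; this replaces your slope argument and closes the proof of (i)$\Rightarrow$(ii).
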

This result states that when the typical clique number $\tcn$ converges to a constant, the scaling $\sf$ is essentially a power of $n$, and the converse is also true.

It is important to note that we required some assumptions to show two-point concentration of the clique number. A natural question to ask is whether these assumptions are strictly speaking necessary. Although we cannot formally make this statement, we can argue that Assumption~\ref{ass:weights_probability_assumption} cannot be significantly relaxed: consider for instance a graph $G_n \sim \rgm$ where the weights have a positive probability $\rho > 0$ of becoming larger than the scaling $\sf$, that is $\P(W \geq \sf) = \rho > 0$. Then the vertices belonging to these weights form a clique because the probability of an edge between any of these vertices equals $1$. Hence, the clique number will have approximately a binomial distribution, that is $\cn(G_n) \sim \text{Bin}(n,\rho)$, and we cannot expect the clique number $\cn(G)$ to be concentrated on any fixed length interval.

This shows that Assumption~\ref{ass:weights_probability_assumption} defines a rather sharp threshold. Below this threshold the clique number $\cn(G)$ is at most logarithmic and highly concentrated, whereas above this threshold the clique number has polynomial size and cannot be concentrated on any fixed length interval.

\section{Examples}
\label{sec:examples}
Theorem~\ref{thm:clique_number_two_point_concentration} shows that the typical clique number $\tcn$ must be very close to the clique number $\cn(G_n)$ of a graph $G_n \sim \rgm$. However, Definition~\ref{def:typical_clique_number} does not give an explicit expression for $\tcn$, but rather it gives an implicit definition as the solution of the fixed-point equation \eqref{eq:typical_clique_number}. Nevertheless, we may derive the asymptotic behavior of $\tcn$ for several interesting choices of weights $\w$ and scalings $\sf$, illustrating the different regimes one might encounter. Note that in most cases these derivations are far from trivial, see Appendix~\ref{sec:derivation_of_examples} for the details. Interestingly, in all the examples that we consider, the typical clique number $\tcn$ is primarily determined by the scaling, namely $\tcn \approx k_n \log_{\sf / \E[\w]}(n)$ where $k_n$ is typically just a constant but can be as large as $\bigO(\log\log n)$.

The maximum weight in the graph plays a crucial role in Assumption~\ref{ass:weights_probability_assumption} and it is directly related to the tail probabilities of the weight distribution by the relation
\begin{equation}
\label{eq:assumption_tail_behavior_relation}
\P\left(\max_{i \in [n]} \w_i \leq x\right)
  = \bigl(1 - \P\left(W > x\right)\bigr)^n \,.
\end{equation}
Because of this relation we find that the tail behavior of the weight distribution plays a key role in the asymptotic behavior of the clique number, and we identify three main classes of weight distributions based on this.

When the weight distribution has bounded support, the behavior of the clique number is very similar to an Erd\H{o}s-R\'{e}nyi random graph. For weights with unbounded support, the behavior of the clique number depends on how heavy the tails are. For weights with heavy-tailed distributions, the scaling must grow roughly as a power of $n$ to ensure that Assumption~\ref{ass:weights_probability_assumption} is satisfied. This restriction on the scaling makes the graph highly sparse, which causes the effect of inhomogeneity due to the weight distribution to disappear. Interestingly, when the weights have a light-tailed distribution, the behavior of the clique number strongly depends on the scaling $\sf$ with different regimes depending on how $\sf$ is chosen.

\subsection{Weights with bounded support}
\label{subsec:weights_with_bounded_support}
In this section, we consider the clique number $\cn(G_n)$ for graphs $G_n \sim \rgm$ with weight distributions $\w$ that have bounded support. The best-known example in this class is the Erd\H{o}s-R\'{e}nyi random graph. In our model, this corresponds to a degenerate weight distribution $\w$, with all the mass at $1$ (denoted by $\text{Degen}(1)$). Note that Assumption~\ref{ass:weights_probability_assumption} is trivially satisfied by taking $\sf \geq s$ for any constant $s > 1$, and the edge probability is simply $p_n=1/\sf^2$. In this case, the relative moments are $\rm{r-1} = 1$ for all $1 \leq r \leq n$, and we immediately see from Lemma~\ref{lem:typical_clique_number_alternative} that
\begin{align}
\begin{split}
\label{eq:typical_clique_number_erdos_renyi}
\tcn
  &= \log_{\sf}(n) - \log_{\sf}\log_{\sf}(n) + \log_{\sf}(e) + 1 + \smallO(1)\\[0.5ex]
  &= 2\log_{1/p_n}(n) - 2\log_{1/p_n}\log_{1/p_n}(n) + 2\log_{1/p_n}(e/2) + 1 + \smallO(1)\,.
\end{split}
\end{align}
This result was also obtained by Matula \cite{Matula1972}.

For other weight distributions, vertices with large weights are more likely to be in the largest clique than vertices with small weights. This idea can be used to show that the first order behavior of the clique number remains as in \eqref{eq:typical_clique_number_erdos_renyi} but with $p_n = (w_{\textup{max}}/\sf)^2$ and where $w_{\textup{max}}$ is the supremum of the support of $\w$. Therefore, for weights with bounded support, the first order behavior of the clique number remains unchanged when we replace the random weights $\w$ by the maximum of their support $w_{\text{max}}$. This happens because vertices with small weight have, asymptotically, a negligible probability of being part of the largest clique.

To see this, note that the edge probabilities are bounded by $p_{i,j} \leq (w_{\text{max}} / \sf)^2$ for all $i \neq j \in [n]$. Plugging this into \eqref{eq:typical_clique_number_erdos_renyi} gives the following high probability upper bound on the clique number $\cn(G_n)$ of a graph $G_n \sim \rgm$,
\begin{equation}
\label{eq:heuristic_upper_bound_optimal_bounded}
\cn(G_n)
  \leq (1 + \smallO(1))\,\frac{\log(n)}{\log(\sf/w_{\text{max}})}\,.
\end{equation}
To obtain a matching lower bound we can use the following simple heuristic. Instead of considering the whole graph, consider the subgraph induced by the vertices with large enough weights, in particular the subgraph induced by the vertices $U_n = \{i \in [n] : W_i > \hb_n\}$ for some $\hb_n$. On this subgraph all weights are larger than $\hb_n$, and therefore we can bound the edge probability by $p_{i,j} \geq (\hb_n / \sf)^2$ for all $i \neq j \in U_n$. Since $|U_n| \approx n\,\P(W > \hb_n)$ we can use \eqref{eq:typical_clique_number_erdos_renyi} to obtain the following high probability lower bound on the clique number,
\begin{equation}
\label{eq:heuristic_lower_bound}
\cn(G_n)
  \geq (1 + \smallO(1))\,\frac{\log(|U_n|)}{\log(\sf/\hb_n)}
  = (1 + \smallO(1))\,\frac{\log(n) + \log(\P(W > \hb_n))}{\log(\sf/\hb_n)}\,.
\end{equation}
Note that this lower bound holds for every $\hb_n$, so we can find an optimal $\hb_n$ that maximizes the right-hand side of \eqref{eq:heuristic_lower_bound}. For weights with bounded support, taking $\hb_n = w_{\text{max}} - \smallO(1)$ suffices, provided that the $\smallO(1)$ term vanishes slowly enough to ensure that $\log\P(W > \hb_n) = \smallO(\log(n))$. This gives
\begin{equation}
\label{eq:heuristic_bound_optimal_bounded}
\cn(G_n)
  = (1 + \smallO(1))\,\frac{\log(n)}{\log(\sf/w_{\text{max}})}\,.
\end{equation}
This is precisely the leading order behavior in \eqref{eq:typical_clique_number_erdos_renyi}, but with $p_n = (w_{\textup{max}}/\sf)^2$.

Determining the asymptotics of the clique number $\cn(G_n)$ more accurately requires significantly more effort, because the argument above no longer suffices and one needs to actually solve \eqref{eq:typical_clique_number} from Definition~\ref{def:typical_clique_number}. In Table~\ref{tbl:typical_clique_number_short_tailed} the typical clique number is shown for some weight distributions. As explained above, the first order behavior is the same in all these examples. However, it can clearly be seen that weights with less mass around the maximum of their support have a smaller second order term, as expected.

\begin{table}[H]
\centering
\small
\caption{The asymptotic behavior of the typical clique number $\tcn$ for some weights $\w$ with support on $[0, 1]$. Here $\Gamma(\cdot)$ denotes the gamma function. See Appendix~\ref{sec:derivation_of_examples} for the derivation of these results.}
\label{tbl:typical_clique_number_short_tailed}
\begin{tabularx}{0.95\textwidth}{p{2.2cm}X}
\toprule
$\w$ & Typical clique number $\tcn$\\
\midrule
\addlinespace
$\text{Degen}(1)$ & $\log_{\sf}(n) - \log_{\sf}\log_{\sf}(n) + \log_{\sf}(e) + 1 + \smallO(1)$\\
\addlinespace
$\text{Ber}(p)$ & $\log_{\sf}(n\,p) - \log_{\sf}\log_{\sf}(n\,p) + \log_{\sf}(e) + 1 + \smallO(1)$\\
\addlinespace
$\text{Unif}(0,1)$ & $\log_{\sf}(n) - 2\log_{\sf}\log_{\sf}(n) + \log_{\sf}(e) + 1 + \smallO(1)$\\
\addlinespace
$\text{Beta}(\alpha,\beta)$ & $\log_{\sf}(n) - (1+\beta) \log_{\sf}\bigl( (1 + \beta) \log_{\sf}(n)\bigr)$\\[0.6ex]
 & \multicolumn{1}{r}{${} + \log_{\sf}(e) + \log_{\sf}(\Gamma(\alpha+\beta) / \Gamma(\alpha)) + 1 + \smallO(1)$}\\
\addlinespace
\bottomrule
\end{tabularx}
\end{table}

\subsection{Weights with light tails}
\label{subsec:weights_with_light_tails}
In this section, we consider the clique number $\cn(G_n)$ for graphs $G_n \sim \rgm$ with weight distributions $\w$ that have unbounded support but light tails. This is arguably the most interesting setting, and where the effect of inhomogeneity is the most pronounced. For such weight distributions the maximum weight $M_n \coloneqq \max_{i \in [n]} \w_i$ is typically highly concentrated around its expectation $\E[M_n]$. Therefore, we can choose any scaling $\sf$ slightly larger than $\E[M_n]$ to satisfy Assumption~\ref{ass:weights_probability_assumption}.
For this class of distributions we observe two very distinct behaviors depending on the choice of scaling $\sf$.

The slowest scaling that still ensures that Assumption~\ref{ass:weights_probability_assumption} is satisfied is $\sf \approx (1+\phi)\E[M_n]$, with $\phi > 0$. For a given weight distribution, this is the densest graph for which we can apply Theorem~\ref{thm:clique_number_two_point_concentration}. In this case, we see that the shape of the weight distribution has a real impact on the asymptotic behavior of the typical clique number $\tcn$, as shown in Table~\ref{tbl:typical_clique_number_light_tailed1}. In other words, the asymptotic behavior of the clique number depends on the chosen weight distribution, and this amounts to more than constant multiplicative factors in the various terms.

We consider three distributions, namely the half-normal, the Gamma and the log-normal. For both the half-normal and Gamma distribution we see that the typical clique number is of order $\log(n)$, whereas in an Erd\H{o}s-R\'{e}nyi random graph with the same edge density the cliques are smaller, of order $\log(n) / \log\log(n)$. For log-normal weights the first order behavior of the typical clique number $\tcn$ is the same in the corresponding Erd\H{o}s-R\'{e}nyi random graph although with different constants. Note that the effect of inhomogeneity is much weaker for log-normal weights. Because the log-normal distribution is ``nearly'' heavy tailed, this is consistent with our findings in the next section, where we show that, for heavy-tailed weights, the specific shape of the distribution is not relevant anymore.

If the scaling is such that Assumption~\ref{ass:weights_probability_assumption} is more easily satisfied (and therefore resulting in sparser graphs) then the contribution of the weight distribution becomes far less prominent. In particular, we consider $\sf \approx \E[M_n]^{1 + \phi}$, with $\phi > 0$. This choice of scaling leads to behavior that is qualitatively similar to that in Section~\ref{subsec:weights_with_bounded_support}. As soon as $\phi > 0$, the resulting graphs become so sparse that the shape of the weight distribution has no severe impact on the asymptotic behavior of the typical clique number $\tcn$, and only multiplicative factors are affected. This can be seen in Table~\ref{tbl:typical_clique_number_light_tailed2}.

The heuristic to obtain a high probability lower bound on the clique number, as explained in the previous section, also remains valid for light-tailed distributions. Interestingly, also in this case the lower bound seems to be tight. That is, for the $\hb_n$ that maximises the lower bound in \eqref{eq:heuristic_lower_bound}, we find exactly the same behavior, including the same constants, of the clique number as in Tables \ref{tbl:typical_clique_number_light_tailed1} and \ref{tbl:typical_clique_number_light_tailed2}. However, because the weights are no longer bounded from above, we are not aware of a simple method to obtain a matching upper bound. Nevertheless, we strongly suspect that this heuristic also gives the correct first order behavior of the clique number for other light-tailed distributions.

\begin{table}[H]
\centering
\small
\caption{The asymptotic behavior of the typical clique number $\tcn$ for some light-tailed weights $\w$ and scaling $\sf \approx (1 + \phi)\E[M_n]$ with $\phi > 0$ arbitrary.
For comparison we include the clique number of an Erd\H{o}s-R\'{e}nyi random graph with the same edge density, that is $p_n = \smash{(\E[\w]/\sf)^2}$. Here $\Gamma(\cdot)$ is the gamma function, and we write $\xi_{k}(\phi) = \smash{-k / \lambertW_{-1}\bigl(-1 / (e (1+\phi)^k)\bigr)} \in (0,1)$, where $\lambertW_{-1}(\cdot)$ is the lower branch of the Lambert-W function, see \eqref{eq:lambert_w}. See Appendix~\ref{sec:derivation_of_examples} for the derivation of these results.}
\label{tbl:typical_clique_number_light_tailed1}
\begin{tabularx}{0.95\textwidth}{p{2.2cm}p{3.4cm}X}
\toprule
$\w$ & $\sf$ & Typical clique number $\tcn$\\
\midrule
\addlinespace
$|\text{N}(0,\sigma)|$ & $(1+\phi) \sqrt{2 \sigma^2 \log(n)}$ &
$(1 + \smallO(1)) \: \xi_2(\phi) \log(n)$\\
\addlinespace
\multicolumn{2}{l}{Comparable Erd\H{o}s-R\'{e}nyi graph} & $(1 + \smallO(1)) \: 2 \log(n) / \log\log(n)$\\
\addlinespace
\midrule
\addlinespace
$\text{Gamma}(\alpha, \beta)$ & $(1 + \phi) \log(n) / \beta$ &
$(1 + \smallO(1)) \: \xi_1(\phi) \log(n)$\\
\addlinespace
\multicolumn{2}{l}{Comparable Erd\H{o}s-R\'{e}nyi graph} & $(1 + \smallO(1)) \: \log(n) / \log\log(n)$\\
\addlinespace
\midrule
\addlinespace
$\text{LN}(0,1)$ & $(1 + \phi) \exp(\sqrt{2 \log(n)})$ &
$(1 + \smallO(1)) \: \sqrt{2\log(n)}$\\
\addlinespace
\multicolumn{2}{l}{Comparable Erd\H{o}s-R\'{e}nyi graph} & $(1 + \smallO(1)) \: \sqrt{(1 / 2) \log(n)}$\\
\addlinespace
\bottomrule
\end{tabularx}
\end{table}

\begin{table}[H]
\centering
\small
\caption{The asymptotic behavior of the typical clique number $\tcn$ for some light-tailed weights $\w$ and scaling $\sf \approx \E[M_n]^{1+\phi}$ with $\phi > 0$ arbitrary.
For comparison we include the clique number of an Erd\H{o}s-R\'{e}nyi random graph with the same edge density, that is $p = \smash{(\E[\w]/\sf)^2}$. See Appendix~\ref{sec:derivation_of_examples} for the derivation of these results.}
\label{tbl:typical_clique_number_light_tailed2}
\begin{tabularx}{0.95\textwidth}{p{2.2cm}p{3.4cm}X}
\toprule
$\w$ & $\sf$ & Typical clique number $\tcn$\\
\midrule
\addlinespace
$|\text{N}(0,\sigma)|$ & $\sqrt{2 \sigma^2 \log(n)}^{1 + \phi}$ & $(1 + \smallO(1)) \: (2 / \phi) \, (\log(n) / \log\log(n))$\\
\addlinespace
\multicolumn{2}{l}{Comparable Erd\H{o}s-R\'{e}nyi graph} & $(1 + \smallO(1)) \: (2 / (1 + \phi)) \, (\log(n) / \log\log(n))$\\
\addlinespace
\midrule
\addlinespace
$\text{Gamma}(\alpha, \beta)$ & $\log(n)^{1 + \phi} / \beta$ & $(1 + \smallO(1)) \: (1 / \phi) (\log(n) / \log\log(n))$\\
\addlinespace
\multicolumn{2}{l}{Comparable Erd\H{o}s-R\'{e}nyi graph} & $(1 + \smallO(1)) \: (1 / (1 + \phi)) (\log(n) / \log\log(n))$\\
\addlinespace
\midrule
\addlinespace
$\text{LN}(0,1)$ & $\exp\bigl(\sqrt{2 \log(n)}\bigr)^{1 + \phi}$ & $(1 + \smallO(1)) \: \bigl((1+\phi) - \sqrt{\phi(2 + \phi)}\bigr) \sqrt{2\log(n)}$\\
\addlinespace
\multicolumn{2}{l}{Comparable Erd\H{o}s-R\'{e}nyi graph} & $(1 + \smallO(1)) \: (1 / (1 + \phi)) \sqrt{(1 / 2) \log(n)}$\\
\addlinespace
\bottomrule
\end{tabularx}
\end{table}

\subsection{Weights with heavy tails}
\label{subsec:weights_with_heavy_tails}
In this section we consider the clique number $\cn(G_n)$ for graphs $G_n \sim \rgm$ with weight distributions $\w$ that have heavy tails, which we define as distributions whose moments are not all finite. For these distributions, finding the clique number is surprisingly straightforward. To apply Theorem~\ref{thm:clique_number_two_point_concentration} we need a scaling $\sf$ such that Assumptions \ref{ass:weights_probability_assumption} and \ref{ass:weights_probability_assumption_increasing_delta} are satisfied, and we necessarily have $\sf \geq n^{\alpha + \smallO(1)}$ for some $\alpha>0$. This means that for heavy-tailed distributions we can always apply Lemma~\ref{lem:typical_clique_number_bounded}, which shows that the typical clique number $\tcn$ is bounded and completely determined by the scaling.

A notable special case of this was treated in \cite{Janson2010} and \cite{Bianconi2005,Bianconi2006}, where the clique number in scale-free graphs with a model similar to ours was considered. In those works the weights have a power-law distribution and the scaling is chosen as $\sf = \sqrt{n}$. The authors find that the clique number asymptotically becomes either $2$ or $3$ when the variance of the weights is finite. Using Lemma~\ref{lem:typical_clique_number_bounded} we first determine that $\tcn \to 3$, since the scaling is $\sf = \sqrt{n}$. Therefore, it follows from Theorem~\ref{thm:clique_number_two_point_concentration} that, asymptotically, the clique number must be either $2$ or $3$, precisely the same result. Note that for this scaling, having weights with finite variance and Assumptions \ref{ass:weights_probability_assumption} and \ref{ass:weights_probability_assumption_increasing_delta} are equivalent.

In the highly inhomogeneous case, where the weights have infinite variance, we require a scaling of $\sf \geq n^{\alpha + \smallO(1)}$ for some $\alpha > 1/2$ in order to satisfy Assumptions \ref{ass:weights_probability_assumption} and \ref{ass:weights_probability_assumption_increasing_delta}. However, when the scaling is this large, the resulting graphs are asymptotically almost or completely empty. On the other hand, as explained at the end of Section~\ref{sec:main_results}, when $\alpha \leq 1/2$ the clique number will approximately have a binomial distribution and thus cannot concentrate on any fixed length interval.

\section{Discussion and overview}
\label{sec:discussion_and_overview}
In this section we remark on our results and discuss some possibilities for future work.

\paragraph{Typical clique number.}
Let us first remark on our main result, Theorem~\ref{thm:clique_number_two_point_concentration}, that shows that the clique number $\cn(G_n)$ of a graph $G_n$ and the corresponding typical clique number $\tcn$ must be very close. As explained in Section~\ref{sec:main_results} the typical clique number $\tcn$ still depends on the $\delta$ from Assumption~\ref{ass:weights_probability_assumption}, whereas the clique number $\cn(G_n)$ of a graph $G_n$ obviously does not. This is certainly not desirable, and it should be possible to define the typical clique number $\tcn$ independently of $\delta$. In all examples that we considered in Section~\ref{sec:examples} this is indeed possible, since in those examples:
\begin{equation}
\label{con:relative_moments}
\frac{\E\bigl[\wt^r\bigr]}{\E\bigl[\wt\bigr]^r}
  = \frac{\E\bigl[\w^r\,\big|\,\w \leq \frac{\sf}{1 + \delta}\bigr]}{\E\bigl[\w\,\big|\,\w \leq \frac{\sf}{1 + \delta}\bigr]^r}
  = (1 + \smallO(1)) \, \frac{\E\bigl[\w^r\bigr]}{\E\bigl[\w\bigr]^r}\,,
  \qquad\text{for all } r \leq \tcn\,.
\end{equation}
We conjecture that a similar statement should hold in general, or at least for a wide class of weights $\w$ and scalings $\sf$. When proven, this would imply that the truncation in Definitions~\ref{def:relative_moments}~and~\ref{def:typical_clique_number} can be ignored. This would solve the issue of the seeming dependence between the typical clique number $\tcn$ and the $\delta$ from Assumption~\ref{ass:weights_probability_assumption}, and at the same time, make explicit computations of the typical clique number $\tcn$ somewhat easier.

\paragraph{Connection between Assumptions \ref{ass:weights_probability_assumption} and \ref{ass:weights_probability_assumption_increasing_delta}.}
Most of our results only require Assumption~\ref{ass:weights_probability_assumption}, but to prove our main result we require the slightly stronger Assumption~\ref{ass:weights_probability_assumption_increasing_delta}. However, in most cases that we checked, Assumption~\ref{ass:weights_probability_assumption_increasing_delta} is implied from Assumption~\ref{ass:weights_probability_assumption}, and the choice of weight distribution $\w$ and scaling $\sf$.

Suppose that $\sf \geq n^{\alpha + \smallO(1)}$ for some $\alpha \in (0, 1)$. Then we necessarily need to have $\E[W^{1 / \alpha}] < \infty$ in order to satisfy Assumption~\ref{ass:weights_probability_assumption}. When a slightly larger moment of $\w$ is also finite, that is $\E[W^{1 / \alpha + \epsilon}] < \infty$ for some $\epsilon > 0$, then both assumptions are simultaneously satisfied. To see this, note that by the moment condition we have $\P(W^{1 / \alpha + \epsilon} > n) \leq \smallO(1/n)$.
Now take any $\eta > 0$, then for $n$ large enough
\begin{align}
\begin{split}
\P\left(W > \frac{\sf}{1 + \eta}\right)
  &\leq \P\left(W > \frac{n^{\alpha + \smallO(1)}}{1 + \eta}\right)\\
  &\leq \P\bigl(W > n^{\alpha / (1 + \alpha \epsilon)}\bigr)
  = \P(W^{1 / \alpha + \epsilon} > n)
  \leq \smallO(1 / n) \,.
\end{split}
\end{align}
Hence, both Assumptions \ref{ass:weights_probability_assumption} and \ref{ass:weights_probability_assumption_increasing_delta} are simultaneously satisfied.

Alternatively, Assumption~\ref{ass:weights_probability_assumption} is also sufficient when $\w$ is regularly varying of index $\beta < 0$. In this case, for any $\eta > 0$, we have
\begin{equation}
\frac{\P\left(W > \frac{\sf}{1 + \eta}\right)}{\P\left(W > \frac{\sf}{1 + \delta}\right)}
  = (1 + \smallO(1)) \left(\frac{1 + \eta}{1 + \delta}\right)^{-\beta} \,.
\end{equation}
By Assumption~\ref{ass:weights_probability_assumption} we have $\smash{\P\bigl(W > \frac{\sf}{1 + \delta}\bigr)} = \smallO(1 / n)$; therefore, we also have $\smash{\P\bigl(W > \frac{\sf}{1 + \eta}\bigr)} = \smallO(1 / n)$. Hence, Assumption~\ref{ass:weights_probability_assumption_increasing_delta} is also satisfied.

\paragraph{Different models.}
In our model, the edge probabilities are $p_{i,j} = \min(X_{i,j}, 1)$, where $X_{i,j} = (\w_i / \sf) \cdot (\w_j / \sf)$. We require the minimum because otherwise some edge probabilities could exceed $1$. To achieve the same effect one has other options; some common examples are $\hat{p}_{i,j} = 1 - \exp(-X_{i,j})$ or $\tilde{p}_{i,j} = X_{i,j} / (1 + X_{i,j})$. Changing the model in this manner does not have a significant influence on the asymptotic clique number, provided Assumption~\ref{ass:weights_probability_assumption} holds. To see this, note that we can bound the edge probabilities of these models by $\min(X_{i,j}/2, 1) \leq \hat{p}_{i,j}, \tilde{p}_{i,j} \leq \min(X_{i,j}, 1)$ with high probability. Obviously, the clique number is then also bounded by the clique numbers obtained from the models with edge probabilities as given in these bounds. Since these bounds differ only by a constant multiplicative factor, it is easily seen from Definition~\ref{def:typical_clique_number} that both lead to the same leading order asymptotics of the clique number when the scaling is diverging. When the scaling is constant, the situation is more subtle and the precise clique number will change by a multiplicative factor that depends on the specific model considered.

Instead of the change in truncation described above, we could also consider different interactions between the weights. We currently only consider so-called rank-1 inhomogeneous random graphs, where the probability of an edge is proportional to the product of the weights of the incident vertices. Instead, we could model different types of interaction by considering an arbitrary symmetric function, called a kernel. It would be interesting to see whether our results can be extended to this more general setting. In particular, whether the two-point concentration of the clique number is specific to rank-1 inhomogeneous random graphs, or whether this remains true for a wider class of kernels.

When weights have bounded support, the heuristic explained in Section~\ref{subsec:weights_with_bounded_support} can be extended to obtain first order behavior of the clique number for a large class of kernels. For these kernels, this gives a simpler approach to finding the asymptotic behavior of the clique number than the method described in \cite{Dolezal2017} which provided a general answer. Moreover, based on the results in Section~\ref{subsec:weights_with_light_tails} it might also be possible to extend these results to unbounded kernels.

\paragraph{Planted clique problem.}
In the planted clique problem one starts by generating a graph as usual. After generating this graph we select a small number of vertices and connect all of them, so that they form a clique. Given such a graph with a planted clique, the problem is to locate this clique with high probability.

The work on this problem has focussed on two cases. In the first case, the underlying graph is an Erd\H{o}s-R\'{e}nyi random graph. In principle, this problem can be solved as soon as the planted clique is of size $\bigO(\log(n))$. However, if one is interested in algorithms that can recover the largest clique in polynomial time, then the best-known algorithms require the planted clique to be of size $\bigO(\sqrt{n})$, see \cite{Alon1998,Feige2000,Dekel2014}. The second case focusses on the very inhomogeneous case, with graphs that have a power-law degree distribution. Here the largest clique can be recovered in polynomial time, see \cite{Janson2010,Friedrich2015}.

Alternatively, one could consider the similar hypothesis testing problem. Here we observe a graph where it is unknown whether a clique was planted, and the problem is to decide whether it was planted or not. Instead of a clique, one could plant a denser subgraph and test whether that was planted or not, see \cite{Arias-Castro2014,Arias-Castro2013a}. Using the model from Section~\ref{sec:main_results} all these problems can be considered in a single framework. It would be particularly interesting to see what the effects of inhomogeneity and sparsity are on the computational complexity in these problems.

\mathtoolsset{showonlyrefs}
\section{Proofs}
\label{sec:proofs}
This section is devoted to proving the results in Section~\ref{sec:main_results}.  The proofs of Lemmas \ref{lem:typical_clique_number_existence}, \ref{lem:typical_clique_number_alternative}, and \ref{lem:typical_clique_number_bounded} are fairly self explanatory. To prove Theorem~\ref{thm:clique_number_two_point_concentration} we use the same approach as Matula \cite{Matula1972}, using the first and second moment method to obtain an upper and lower bound on the clique number separately.

\subsection{Proof of Lemma~\ref{lem:typical_clique_number_existence}: Existence and uniqueness of the typical clique number}
\label{subsec:existence_of_the_typical_clique_number}
Lemma~\ref{lem:typical_clique_number_existence} shows that Assumption \ref{ass:weights_probability_assumption} is sufficient to guarantee the existence and uniqueness of the typical clique number $\tcn$ in Definition~\ref{def:typical_clique_number}.  We first show that there must be at least one solution to \eqref{eq:typical_clique_number} and then show that this solution is unique.

\begin{proof}[Proof of Lemma~\ref{lem:typical_clique_number_existence}]
To simplify notation, let $f_n(r)$ be the right-hand side of \eqref{eq:typical_clique_number}, that is
\begin{equation}
\label{eq:right_hand_side_typical_clique_number}
f_n(r) = \frac{\log\left(n\right) - \log\left(r\right) + \log\left(\rm{r-1}\right) + 1}{\log\left(\sf/\E[\wt]\right)} + 1 \,.
\end{equation}
To prove the lemma we must show that the solution set of \eqref{eq:typical_clique_number}, given by $\{r\geq 1 : r=f_n(r)\}$, is non-empty and consists of a single point.  First note that $\rm{r-1}$ is a continuous function in $r$ (since these are relative moments of a \emph{truncated} distribution). This in turn implies that  $f_n(r)$ is continuous in $r$. To ensure that the solution set is non-empty, first note that
\begin{equation}
f_n(1)
  = \frac{\log\left(n\right) + 1}{\log\left(\sf/\E[\wt]\right)} + 1 \geq 1 \,,
\end{equation}
and
\begin{equation}
f_n(n)
  = \frac{\log\left(\rm{n-1}\right) + 1}{\log\left(\sf/\E[\wt]\right)} + 1
  \leq (n-1)\frac{\log\left(\frac{\sf}{1 + \delta} \middle/ \E[\wt]\right)}{\log\left(\sf \middle/ \E[\wt]\right)} + 1
  \leq n \,.
\end{equation}
Hence, there exists at least one value $r \in [1,n]$ satisfying $r = f_n(r)$. To show the uniqueness of this solution we simply show that the slope of $f_n(r)$ is strictly smaller than 1. Note that
\begin{align}
\frac{\partial}{\partial r} f_n(r)
  &= \frac{(\rm{r-1}' / \rm{r-1}) - (1/r)}{\log(\sf / \E[\wt])}\\
  &= \frac{\left(\E\bigl[\log\left(\wt \big/ \E[\wt]\right) \wt^{r-1}\bigr] \middle/ \E\bigl[\wt^{r-1}\bigr]\right) - (1 / r)}{\log(\sf / \E[\wt])}\\
\label{eq:right_hand_side_typical_clique_number_derivative}
  &\leq \frac{\log\left(\frac{\sf}{1 + \delta} \middle/ \E[\wt]\right) - (1 / r)}{\log\left(\sf \middle/ \E[\wt]\right)}
  < 1\,,
\end{align}
where $\rm{r-1}'$ denotes the partial derivative of $\rm{r-1}$ with respect to $r$. Since the partial derivative of $f_n(r)$ is strictly less than $1$, there can be at most a single solution of $r = f_n(r)$. Hence, the typical clique number is well defined.
\end{proof}

\subsection{Proof of Lemma~\ref{lem:typical_clique_number_alternative}: Alternative characterization of the typical clique number}
\label{subsec:alternative_representation_of_the_typical_clique_number}
Here we derive an alternative representation for the typical clique number $\tcn$, as formulated in Lemma~\ref{lem:typical_clique_number_alternative}. This is sometimes more convenient than the original in Definition~\ref{def:typical_clique_number}.

\begin{proof}[Proof of Lemma~\ref{lem:typical_clique_number_alternative}]
By Lemma~\ref{lem:typical_clique_number_existence} we know that the typical clique number $\tcn$ exists. Therefore, we can solve \eqref{eq:typical_clique_number} to see that the typical clique number is also the solution of
\begin{equation}
r
  = \frac{\lambertW_0\left(n \, \rm{r-1} \, e \, b \, \log(b)\right)}{\log(b)} \,,
\end{equation}
where $b = \sf/\E[\wt]$ and $\lambertW_0(\cdot)$ denotes the principal branch of the Lambert-W function, see \eqref{eq:lambert_w}. Using the approximation $\lambertW_0(x) = \log(x) - \log\log(x) + \smallO(1)$ as $x \to \infty$, as shown in \cite{Corless1996}, we obtain
\begin{align}
r &= \log_b\left(n \, \rm{r-1} \, e \, b \log(b)\right) - \log_b\log\left(n \, \rm{r-1} \, e \, b \log(b)\right) + \smallO(1)\\[0.5ex]
  &= \log_b\left(n \, \rm{r-1}\right) - \log_b\log_b\left(n \, \rm{r-1}\right) + \log_b\left(e\right) + 1 + \smallO(1) \,. \qedhere
\end{align}
\end{proof}

\subsection{Proof of Lemma~\ref{lem:typical_clique_number_bounded}: Bounded typical clique number}\label{subsec:bounded_typical_clique_number}
Here we show that the scaling $\sf$ is a positive power of $n$ if, and only if, the typical clique number $\tcn$ converges to a constant. To this end, we first derive a small lemma:

\begin{lemma}
\label{lem:typical_clique_number_bounded_additional}
Let $\alpha \in (0, 1)$. If the scaling satisfies $\sf \geq n^{\alpha + \smallO(1)}$ then
\begin{equation}
\label{eq:relative_moment_bound}
\frac{\log\bigl(\rm{1/\alpha + \smallO(1)}\bigr)}{\log\bigl(\sf / \E[\wt]\bigr)} = \smallO(1) \,.
\end{equation}
\end{lemma}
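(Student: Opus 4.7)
The plan is to bound the numerator and denominator of $\rm{r} = \E[\wt^r]/\E[\wt]^r$ for $r = 1/\alpha + \smallO(1)$ by extracting, from Assumption~\ref{ass:weights_probability_assumption} together with $\sf \geq n^{\alpha+\smallO(1)}$, a pointwise tail bound on $\w$ of the form $\P(\w > x) \leq x^{-1/\alpha+\epsilon}$ for large $x$. The naive estimate $\E[\wt^r] \leq T^{r-1}\E[\wt]$, with $T = \sf/(1+\delta)$, gives only $\log\rm{r}/\log(\sf/\E[\wt]) \leq r-1 = 1/\alpha - 1 + \smallO(1)$, which is bounded away from $0$ and therefore insufficient; the improvement will come from integrating the tail rather than using $\wt \leq T$ pointwise.

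First, I would translate the hypotheses into a pointwise tail bound on $\w$. A union bound applied to Assumption~\ref{ass:weights_probability_assumption} yields $\P(\w > T) = \smallO(1/n)$, and since $T \geq n^{\alpha+\smallO(1)}/(1+\delta)$ this gives $\P(\w > T_n) = \smallO(T_n^{-1/\alpha + \smallO(1)})$ along the sequence $T_n$. Because $\w$ has a fixed distribution and $x\mapsto\P(\w>x)$ is monotone, this inverts to: for every $\epsilon>0$ there exists $x_\epsilon$ such that $\P(\w>x) \leq x^{-1/\alpha+\epsilon}$ for all $x\geq x_\epsilon$. Since $\alpha<1$, this in particular gives $\E[\w]<\infty$, so $\E[\wt] \to \E[\w] \in (0,\infty)$ and $\log\E[\wt] = \bigO(1)$.

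Next, writing $r = 1/\alpha + \epsilon_n$ with $\epsilon_n \to 0$, I would estimate the truncated moment via the layer-cake formula
\begin{equation}
\E[\wt^r] \leq \frac{r}{\P(\w\leq T)}\int_0^T u^{r-1}\,\P(\w>u)\,du,
\end{equation}
and split the integral at $M = x_\epsilon$ for a fixed $\epsilon > 0$. The bulk piece over $[0,M]$ is bounded by $M^r$. The tail piece, using the tail bound, is at most $\int_M^T u^{\epsilon_n+\epsilon-1}\,du \leq T^{\epsilon_n+\epsilon}/(\epsilon_n+\epsilon)$ once $\epsilon_n+\epsilon>0$. Combined with $\log\E[\wt] = \bigO(1)$, this yields $\log\rm{r} \leq (\epsilon_n + \epsilon)\log T + \bigO_\epsilon(1)$. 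Dividing by $\log(\sf/\E[\wt])$, which is asymptotic to $\log T$, gives $\limsup_n \log\rm{r}/\log(\sf/\E[\wt]) \leq \epsilon$, and since $\rm{r}\geq 1$ by Jensen the quotient is nonnegative; sending $\epsilon\to 0$ yields the conclusion.

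The main obstacle I anticipate is the first step: converting the sequence-level statement $\P(\w > T_n) = \smallO(1/n)$ into the pointwise tail bound $\P(\w > x) \leq x^{-1/\alpha+\epsilon}$ valid for all large $x$. This monotonicity-based inversion requires that the thresholds $T_n = \sf/(1+\delta)$ are regular enough to probe every large scale, and the hypothesis $\sf \geq n^{\alpha+\smallO(1)}$ ensures exactly this. Once the tail bound is in place, the remaining moment estimates are standard.
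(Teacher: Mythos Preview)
Your strategy matches the paper's: both extract a tail bound of the form $\P(\w > x) \le x^{-1/\alpha+\epsilon}$ from Assumption~\ref{ass:weights_probability_assumption} together with the scaling hypothesis, and then integrate it to control $\E\bigl[\wt^{1/\alpha+\smallO(1)}\bigr]$. The paper carries out the integration in the variable $\w^{1/\alpha-\epsilon}$, bounding $\E\bigl[\wt^{1/\alpha-\epsilon}\bigr]$ by $\bigO(\log n)$ via the tail formula and then passing to exponent $1/\alpha+\smallO(1)$ through the crude estimate $\wt^{1/\alpha+\smallO(1)}\le\wt^{1/\alpha-\epsilon}\sf^{2\epsilon}$; you integrate directly with the weight $u^{r-1}$ in the layer-cake formula. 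These are reorganizations of the same computation.

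The step you flag as the main obstacle does contain a real issue, though not the one you describe. From $\P(\w>T_n)=\smallO(1/n)$ together with $T_n\ge n^{\alpha+\smallO(1)}$ you conclude $\P(\w>T_n)=\smallO\bigl(T_n^{-1/\alpha+\smallO(1)}\bigr)$. But the inequality points the wrong way: $T_n\ge n^{\alpha}$ gives $T_n^{-1/\alpha}\le 1/n$, so a quantity that is $\smallO(1/n)$ need not be $\smallO\bigl(T_n^{-1/\alpha}\bigr)$. Concretely, take $\alpha=1/3$, $\sf=\sqrt{n}$, and $\w$ Pareto with $\P(\w>x)=x^{-2.1}$ for $x\ge 1$: then Assumption~\ref{ass:weights_probability_assumption} holds and $\sf\ge n^{\alpha+\smallO(1)}$, yet $\P(\w>x)\asymp x^{-2.1}$ rather than $x^{-3+\epsilon}$, and a direct computation gives $\log(\rm{1/\alpha})/\log(\sf/\E[\wt])\to 0.9\ne 0$. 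The paper's own proof shares this implicit step: the displayed identity $\P\bigl(\w^{1/(\alpha+\smallO(1))}>n\bigr)=\P(\w>\sf)$ is using $\sf=n^{\alpha+\smallO(1)}$ with equality, which is indeed how the lemma is invoked in the first direction of Lemma~\ref{lem:typical_clique_number_bounded}. Under that reading your inversion is valid---then $1/n=T_n^{-1/\alpha+\smallO(1)}$, and monotonicity of the tail propagates the bound to all large $x$---and the remainder of your argument is correct.
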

\begin{proof}
By Assumption~\ref{ass:weights_probability_assumption},
\begin{equation}
\P\left(\max_{i \in [n]} \w_i \leq \sf\right)
  = \left(1 - \P\left(\w > \sf\right)\right)^n
  \to 1 \,.
\end{equation}
Let $\epsilon > 0$ be arbitrary, then for $n$ large enough and using the above we obtain
\begin{equation}
\label{eq:weight_probability_bound}
\P\left(\w^{1 / \alpha - \epsilon} > n\right)
  \leq \P\left(\w^{1 / (\alpha + \smallO(1))} > n\right)
  = \P\left(\w > \sf\right)
  = \smallO\left(\frac{1}{n}\right) \,.
\end{equation}
Therefore, using the tail formula for expectation,
\begin{align}
\E\left[\wt^{1 / \alpha - \epsilon}\right]
  &\leq (1 + \smallO(1)) \E\left[\w^{1 / \alpha - \epsilon} \, \1{\w^{1 / \alpha + \smallO(1)} \leq n}\right]\\
  &\leq (1 + \smallO(1)) \E\left[\w^{1 / \alpha - \epsilon} \, \1{\w^{1 / \alpha - \epsilon} \leq n}\right]\\
  &= (1 + \smallO(1)) \int_{0}^{\infty} \P\left(\w^{1 / \alpha - \epsilon} \, \1{\w^{1 / \alpha - \epsilon} \leq n} > x\right) \, dx \,,\\
\intertext{where $\1{\cdot}$ denotes the usual indicator function. Note that $\w^{1 / \alpha_n} \, \1{\w^{1 / \alpha_n} \leq n} \leq n$, so we can change the upper integration limit. This gives}
\label{eq:weight_expectation_bound}
\E\left[\wt^{1 / \alpha - \epsilon}\right]
  &\leq \bigO(1) + (1 + \smallO(1)) \int_{1}^{n} \P\left(\w^{1 / \alpha - \epsilon} > x\right) \, dx\\
  &\leq \bigO(1) + (1 + \smallO(1)) \int_{1}^{n} \frac{1}{x} \, dx
  \;=\; \bigO(1) + (1 + \smallO(1)) \log\left(n\right) \,.
\end{align}
Based on the above we conclude
\begin{align}
\frac{\log\bigl(\rm{1/\alpha + \smallO(1)}\bigr)}{\log\bigl(\sf / \E[\wt]\bigr)}
  &= (1 + \smallO(1))\frac{\log\bigl(\E\bigl[\wt^{1 / \alpha + \smallO(1)}\bigr]\bigr)}{\log\bigl(\sf\bigr)}\\
  &\leq (1 + \smallO(1))\frac{\log\bigl(\E\bigl[\wt^{1 / \alpha - \epsilon}\bigr] \, \sf^{2\epsilon}\bigr)}{\log\bigl(\sf\bigr)}\\
  &= (1 + \smallO(1))\frac{\log\log\bigl(n\bigl)}{\log\bigl(n^{\alpha + \smallO(1)}\bigl)} + 2\epsilon + \smallO(1)
  = 2\epsilon + \smallO(1) \,.
\end{align}
Since $\epsilon > 0$ can be taken arbitrarily small, this completes the proof.
\end{proof}
%

We are now ready to prove Lemma~\ref{lem:typical_clique_number_bounded}. The main idea is that, as $n \to \infty$, most terms of \eqref{eq:typical_clique_number} become negligible and the remaining terms no longer involve $n$.

\begin{proof}[Proof of Lemma~\ref{lem:typical_clique_number_bounded}]
First suppose that $\sf = n^{\alpha + \smallO(1)}$ with $\alpha \in (0, 1)$. By Definition~\ref{def:typical_clique_number} the typical clique number $\tcn$ satisfies
\begin{equation}
\label{eq:typical_clique_number_simplification_part1}
\tcn
  = \frac{\log(n) - \log(\tcn) + \log(\rm{\tcn-1}) + 1}{\log\bigl(\sf/\E[\wt]\bigr)} + 1 \,.
\end{equation}
Plugging $\tcn = 1 + 1 / \alpha + \smallO(1)$ into \eqref{eq:typical_clique_number_simplification_part1} and using Lemma~\ref{lem:typical_clique_number_bounded_additional},
\begin{align}
\label{eq:typical_clique_number_simplification_part2}
\tcn &= \frac{\log(n) + \log(\rm{1/\alpha + \smallO(1)})}{\log\bigl(\sf/\E[\wt]\bigr)} + 1\\
  &= \frac{\log(n)}{\log\bigl(n^{\alpha + \smallO(1)}/\E[\wt]\bigr)} + 1 + \smallO(1)\\
  &= \frac{1}{\alpha} + 1 + \smallO(1) \,.
\end{align}
Hence \eqref{eq:typical_clique_number_simplification_part1} is satisfied for $\tcn = 1 + 1/\alpha + \smallO(1)$ and by Lemma~\ref{lem:typical_clique_number_existence} this must be the unique solution.

For the other direction, suppose that $\tcn = 1 + 1 / \alpha + \smallO(1)$ with $\alpha \in (0, 1)$. Then, by Definition~\ref{def:typical_clique_number} and Lemma~\ref{lem:typical_clique_number_bounded_additional},
\begin{align}
\tcn - 1
  &= \frac{\log\left(n\right) - \log\left(\tcn\right) + \log\left(\rm{\tcn-1}\right) + 1}{\log\left(\sf/\E[\wt]\right)}\\
  &= \frac{\log\left(n\right)}{\log\left(\sf/\E[\wt]\right)} + \smallO(1) \,.
\end{align}
Solving for $\sf$ we see that $\sf = n^{\alpha + \smallO(1)}$ since $\E[\wt]$ is uniformly bounded.
\end{proof}

\subsection{Proof of Theorem~\ref{thm:clique_number_two_point_concentration}: Concentration of the clique number}
\label{subsec:concentration_of_the_clique_number}
In this section we prove Theorem~\ref{thm:clique_number_two_point_concentration}, our main result. First we derive some useful results characterizing the relative moments. Then the proof itself is split into two parts:
the high-probability upper bound on the clique number in Sections \ref{subsubsec:upper_bound_on_the_clique_number} and \ref{subsubsec:upper_bound_on_the_clique_number2} using the first
moment method, and the high-probability lower bound on the clique number in Sections \ref{subsubsec:lower_bound_on_the_clique_number} and \ref{subsubsec:lower_bound_on_the_clique_number2}, using the
second moment method. In both parts we separately consider two cases: $\tcn \to \infty$ and $\tcn$ is bounded. In fact, a third might be possible, namely $\liminf_{n \to \infty} \tcn < \limsup_{n \to \infty} \tcn = \infty$. However, in that case we can apply the reasoning below to a maximal subsequence of $\tcn$ converging to infinity, and control the remaining terms by the argument used when $\tcn$ is bounded.

\subsubsection{Auxiliary results}
\label{subsubsec:auxiliary_results}
Binomial coefficients play an important role in counting the number of cliques. Therefore, it is crucial to have tight bounds on the binomial coefficient, provided by the lemma below. This lemma and the corresponding proof can be found in \cite{Spencer2014}:
\begin{lemma}
\label{lem:binomial_coefficient_stirling_approximation}
Suppose that $r = \smallO(\sqrt{n})$, then the binomial coefficient can be approximated by
\begin{equation}
\label{eq:binomial_coefficient_stirling_approximation}
\binom{n}{r} = (1 + \smallO(1)) \frac{1}{\sqrt{2 \pi r}} \left(\frac{n\,e}{r}\right)^r \,.
\end{equation}
\end{lemma}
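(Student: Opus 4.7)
The plan is to reduce the claim to Stirling's formula by splitting the binomial coefficient into three ingredients. I would write
\[
\binom{n}{r} = \frac{1}{r!}\prod_{k=0}^{r-1}(n-k) = \frac{n^r}{r!}\prod_{k=0}^{r-1}\!\left(1-\frac{k}{n}\right),
\]
and then estimate the product correction and the factorial $r!$ separately, while keeping the factor $n^r$ exact.

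My first step would be to show that the product correction equals $1+\smallO(1)$. Taking logarithms and using the expansion $\log(1-x) = -x + \bigO(x^2)$, which is valid here because $k/n \leq r/n \to 0$, gives
\[
\sum_{k=0}^{r-1}\log\!\left(1-\tfrac{k}{n}\right) = -\frac{r(r-1)}{2n} + \bigO\!\left(\frac{r^3}{n^2}\right).
\]
The hypothesis $r = \smallO(\sqrt{n})$ is precisely what is needed to make both $r^2/n$ and $r^3/n^2$ vanish, so the sum is $\smallO(1)$ and the product is $1+\smallO(1)$. My second step would be to apply standard Stirling, $r! = (1+\smallO(1))\sqrt{2\pi r}\,(r/e)^r$ (for bounded $r$ the factorial is a constant that can be absorbed into the $(1+\smallO(1))$), to the denominator. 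Combining these two estimates with the exact $n^r$ factor then yields
\[
\binom{n}{r} = (1+\smallO(1))\,\frac{1}{\sqrt{2\pi r}}\left(\frac{n\,e}{r}\right)^r,
\]
which is exactly the claim.

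The main (and really the only) obstacle is the product estimate, and the role of the hypothesis is transparent there: $r = \smallO(\sqrt{n})$ is the exact threshold at which the Gaussian-type correction $\exp(-r^2/(2n))$ can be absorbed into the $(1+\smallO(1))$ factor. If $r$ were to grow like $\sqrt{n}$ or faster, this correction would need to be kept explicitly and the stated approximation would no longer hold, which is why the lemma is formulated in this regime.
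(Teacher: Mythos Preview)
The paper does not actually prove this lemma; it simply cites Spencer's textbook for both the statement and its proof. Your argument is the standard one---split off $n^r$, control the product $\prod_{k=0}^{r-1}(1-k/n)$ via its logarithm, and apply Stirling to $r!$---and is correct when $r\to\infty$.

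One small slip: your parenthetical remark that ``for bounded $r$ the factorial is a constant that can be absorbed into the $(1+\smallO(1))$'' is not right. For fixed $r$ the ratio $r!\big/\bigl(\sqrt{2\pi r}\,(r/e)^r\bigr)$ is a constant strictly larger than $1$ (e.g.\ about $1.08$ for $r=1$), and a fixed constant $\neq 1$ cannot be absorbed into a $(1+\smallO(1))$ factor where the $\smallO(1)$ refers to $n\to\infty$. In other words, the approximation as stated genuinely requires $r\to\infty$; this is an implicit hypothesis of the lemma rather than something your argument can circumvent. In the paper's applications this causes no trouble, since the bounded-$r$ case is handled separately and only one-sided Stirling bounds are needed there.
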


Another important ingredient for the proof of Theorem~\ref{thm:clique_number_two_point_concentration} is to have sharp bounds on the relative moments from Definition~\ref{def:relative_moments}, which are provided by the following lemma. By definition, the typical clique number $\tcn$ is the solution to \eqref{eq:typical_clique_number}. If we consider the right-hand side and left-hand side of \eqref{eq:typical_clique_number} separately, then we see that these two functions must intersect at $\tcn$. Moreover, the right-hand side of \eqref{eq:typical_clique_number} will always grow more slowly than the left-hand side, as shown in the proof of Lemma~\ref{lem:typical_clique_number_existence}. This means that there exists a straight line in between these two functions, intersecting at $\tcn$ as illustrated in Figure~\ref{fig:relative_moments_bound}. Using this line we can then find bounds on the right-hand side of \eqref{eq:typical_clique_number} which in turn lead to bounds on the relative moments.
\begin{figure}[h]
    \centering
    \begin{tikzpicture}
    \begin{axis}[
        width=0.65\textwidth,
        height=0.40\textwidth,
        xlabel={},
        xmin=1, xmax=100,
        xtick={0,25,50,75,100},
        minor x tick num=1,
        ylabel={},
        ymin=0, ymax=100,
        ytick={0,25,50,75,100},
        minor y tick num=1,
        legend entries={$r$, $f_n(r)$, $\ell_n(r)$},
        legend cell align={left},
        legend style={
            legend pos=outer north east
            },
        ]
        \addplot[forget plot,white!80!black,thin,solid,domain=0:100,samples=4]{x};
        \addplot[black,thick,dotted,domain=0:100,samples=4]{x};

        \addplot[forget plot,white!80!black,thin,solid,domain=0:100,samples=200]{(x/20)^2+36};
        \addplot[black,thick,dashed,domain=0:100,samples=200]{(x/20)^2+36};

        \addplot[black,thick,solid,domain=0:100,samples=4]{(7/10)*(x-40)+40};

        \draw[->] (axis cs:31,49) to (axis cs:38,42);
        \node[] at (axis cs:27,53) {\small $\tcn$};
        \fill[black] (axis cs:40,40) circle[radius=1.2pt];
    \end{axis}
    \end{tikzpicture}
    \caption{Example of the line $\ell_n(r)$ with slope $\beta_n$ through $\tcn$ (solid line). Note that this line is a lower bound on $f_n(r)$ for all $r \in [1,\tcn]$, and an upper bound on $f_n(r)$ for all $r \in [\tcn,n]$.}
    \label{fig:relative_moments_bound}
\end{figure}
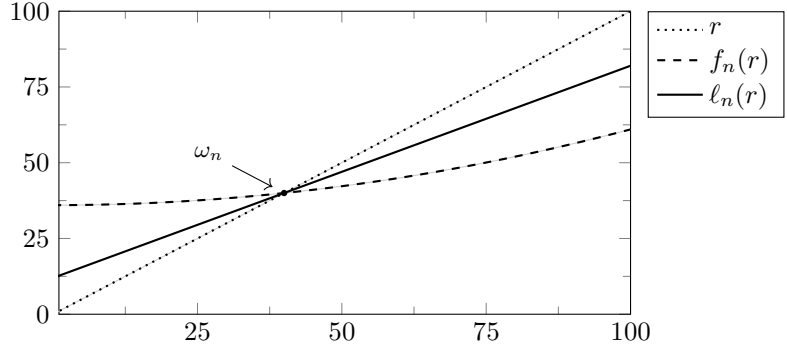

\begin{lemma}
\label{lem:relative_moments_bound}
Under Assumption \ref{ass:weights_probability_assumption}, the relative moments $\rm{r-1}$ from Definition~\ref{def:relative_moments} can be bounded by
\begin{align}
\rm{r-1} &\geq \left(\frac{\sf}{\E[\wt]}\right)^{\beta_n \bigl((r-1)-(\tcn-1)\bigr) + (\tcn-1)} \frac{r}{n \, e} \,,
  \qquad &&\text{for all } 1 \leq r \leq \tcn \,,\\
\rm{r-1} &\leq \left(\frac{\sf}{\E[\wt]}\right)^{\beta_n \bigl((r-1)-(\tcn-1)\bigr) + (\tcn-1)} \frac{r}{n \, e} \,,
  \qquad &&\text{for all } \tcn \leq r \leq n \,,
\intertext{with $\beta_n$ given by}
\label{eq:relative_moments_bound_beta}
\beta_n &= \frac{\log\left(\frac{\sf}{1 + \delta} \middle/ \E[\wt]\right)}{\log\left(\sf \middle/ \E[\wt]\right)} < 1\,,
\end{align}
and where $\delta > 0$ arises from Assumption~\ref{ass:weights_probability_assumption}.
\end{lemma}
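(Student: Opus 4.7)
The plan is to convert the geometric picture of Figure~\ref{fig:relative_moments_bound} into the claimed analytic bounds on $\rm{r-1}$ by inverting the defining fixed-point equation of $f_n$ introduced in \eqref{eq:right_hand_side_typical_clique_number}.

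First, I would introduce the line $\ell_n(r) = \tcn + \beta_n(r - \tcn)$, which passes through the fixed point $(\tcn, \tcn)$ with slope $\beta_n$ as given in \eqref{eq:relative_moments_bound_beta}. The crucial input has already been derived in the proof of Lemma~\ref{lem:typical_clique_number_existence}, where the chain of inequalities
\begin{equation*}
f_n'(r) \leq \frac{\log\bigl(\tfrac{\sf}{1+\delta}\big/\E[\wt]\bigr) - (1/r)}{\log(\sf/\E[\wt])} = \beta_n - \frac{1/r}{\log(\sf/\E[\wt])} < \beta_n
\end{equation*}
was established for every $r \geq 1$. Consequently, $g(r) := f_n(r) - \ell_n(r)$ is strictly decreasing in $r$, and $g(\tcn) = 0$ because $\tcn$ is a fixed point of $f_n$. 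From this I would conclude that $f_n(r) \geq \ell_n(r)$ for $1 \leq r \leq \tcn$ and $f_n(r) \leq \ell_n(r)$ for $\tcn \leq r \leq n$, which is exactly the situation depicted in Figure~\ref{fig:relative_moments_bound}.

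Second, I would invert the definition of $f_n$ to isolate the relative moment, writing
\begin{equation*}
\log \rm{r-1} = \bigl(f_n(r) - 1\bigr)\log\bigl(\sf/\E[\wt]\bigr) + \log(r) - \log(n) - 1,
\end{equation*}
and substitute in the one-sided bounds on $f_n$ obtained above. Using the identity
\begin{equation*}
\ell_n(r) - 1 = (\tcn - 1) + \beta_n\bigl((r-1) - (\tcn - 1)\bigr),
\end{equation*}
and then exponentiating yields exactly the two stated inequalities, with the direction of the inequality flipping appropriately depending on whether $r \leq \tcn$ or $r \geq \tcn$.

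There is no serious obstacle here: both ingredients, the slope bound $f_n' < \beta_n$ and the fixed-point property $f_n(\tcn) = \tcn$, are already in hand from Lemma~\ref{lem:typical_clique_number_existence}. The only care needed is algebraic bookkeeping to preserve the direction of inequality through multiplication by the positive quantity $\log(\sf/\E[\wt])$ and through exponentiation, and to verify that $\beta_n < 1$, which is immediate from Assumption~\ref{ass:weights_probability_assumption} since $\delta > 0$.
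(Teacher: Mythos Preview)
Your proposal is correct and follows essentially the same approach as the paper: define the line $\ell_n(r)=\tcn+\beta_n(r-\tcn)$ through the fixed point, use the derivative bound from the proof of Lemma~\ref{lem:typical_clique_number_existence} to compare $f_n$ with $\ell_n$ on either side of $\tcn$, and then invert the defining relation of $f_n$ to extract the claimed inequalities on $\rm{r-1}$. The only cosmetic difference is that you phrase the comparison via the monotonicity of $g=f_n-\ell_n$, whereas the paper states the slope bound directly; the content is identical.
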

\begin{proof}
Let $f_n(r)$ be as defined in \eqref{eq:right_hand_side_typical_clique_number}, then the typical clique number $\tcn$ is the solution in $r$ of $r = f_n(r)$. Consider $\ell_n(r) = \beta_n(r - \tcn) + \tcn$, which is the line through $\tcn$ with slope $\beta_n$, as shown in Figure~\ref{fig:relative_moments_bound}. We will show that, for all $n$, the line $\ell_n(r)$ is a lower bound on $f_n(r)$ when $r \in [1,\tcn]$, and an upper bound on $f_n(r)$ when $r \in [\tcn,n]$.

The slope of $f_n(r)$ was derived in \eqref{eq:right_hand_side_typical_clique_number_derivative} and is bounded by $\beta_n$ given in \eqref{eq:relative_moments_bound_beta}. Hence, we have $\ell_n(r) \leq f_n(r)$ when $r \in [1,\tcn]$ and $\ell_n(r) \geq f_n(r)$ otherwise.

To finish the proof note that, for all $r \in [1,\tcn]$,
\begin{align}
\beta_n \bigl((r-1) - (\tcn-1)\bigr) + \tcn
  &= \ell_n(r)\\
  &\leq f_n(r)
  = \frac{\log(n) - \log(r) + \log(\rm{r-1}) + 1}{\log(\sf / \E[\wt])} + 1 \,.
\end{align}
Exponentiating both sides yields
\begin{equation}
\left(\frac{\sf}{\E[\wt]}\right)^{\beta_n \bigl((r-1) - (\tcn-1)\bigr) + (\tcn - 1)}
  \leq \frac{n \, \rm{r-1} \, e}{r} \,.
\end{equation}
Multiplying both sides by $r / (n \, e)$ gives the first result. The second result follows similarly.
\end{proof}

\subsubsection{Upper bound with diverging typical clique number}
\label{subsubsec:upper_bound_on_the_clique_number}
In this section we prove the upper bound of Theorem~\ref{thm:clique_number_two_point_concentration} assuming that $\tcn \to \infty$. Define the event
\begin{equation}
\label{eq:truncation_event}
\te = \left\{\max_{i \in [n]} \w_i \leq \frac{\sf}{1 + \delta}\right\} \,.
\end{equation}
Assumption~\ref{ass:weights_probability_assumption} enforces that $\P(\te) \to 1$ as $n \to \infty$. A trivial, but crucial, observation is that the joint distribution of the weights $(\w_1,\ldots,\w_n)$ conditional on the event $\te$ is the same as that of a sequence of i.i.d.\ truncated weights $\smash{(\wt_1,\ldots,\wt_n)}$. In other words
\begin{equation}
\label{eq:conditional_weight_distribution}
(\w_1,\ldots,\w_n)\,|\,\te \stackrel{d}{=} (\wt_1,\ldots,\wt_n) \,,
\end{equation}
where $\wt_i$ are i.i.d.\ random variables with the same distribution as $\wt$. This statement can be checked by an elementary computation.

Let $\cn(G_n)$ be the clique number of the graph $G_n$ and define $N_r$ to be the number of cliques of size $r$ in $G_n$. Then, by Assumption~\ref{ass:weights_probability_assumption} and the first moment method,
\begin{align}
\P(\cn(G_n) \geq r)
  &= (1 + \smallO(1)) \P(\cn(G_n) \geq r \,|\, \te)\\[1ex]
  &= (1 + \smallO(1)) \P(N_r \geq 1 \,|\, \te)\\[1ex]
\label{eq:first_moment_method}
  &\leq (1 + \smallO(1)) \E[N_r \,|\, \te] \,.
\end{align}
Then by linearity of expectation,
\begin{align}
\E[N_r \,|\, \te]
  &= \sum_{C\subseteq [n]:\,|C| = r} \P(C \text{ is a clique in } G_n \,|\, \te)\\
  &= \sum_{C\subseteq [n]:\,|C| = r} \E\left[\prod_{i<j \in C} \frac{\w_i}{\sf} \cdot \frac{\w_j}{\sf} \wedge 1 \,\middle|\, \te \right]\\
  &= \sum_{C\subseteq [n]:\,|C| = r} \E\left[\prod_{i<j \in C} \frac{\wt_i}{\sf} \cdot \frac{\wt_j}{\sf}\right]\\
\label{eq:expected_cliques_of_size}
  &= \binom{n}{r} \E\left[\left(\frac{\wt}{\sf}\right)^{r-1}\right]^r\\
  &= \binom{n}{r} \left(\rm{r-1} \left(\frac{\E[\wt]}{\sf}\right)^{r-1}\right)^r \,.
\end{align}
To prove the upper bound of Theorem~\ref{thm:clique_number_two_point_concentration}, we need show that $\E[N_r \,|\, \te] \to 0$, as $n \to \infty$, when $r > \lfloor \tcn + \epsilon \rfloor$, and since $r$ is integer this implies $r \geq \tcn + \epsilon$. Using Lemmas~\ref{lem:binomial_coefficient_stirling_approximation} and \ref{lem:relative_moments_bound} we can further bound the above expression as
\begin{align}
\E[N_r \,|\, \te]
  &\leq (1 + \smallO(1)) \frac{1}{\sqrt{2 \pi r}} \left(\frac{n\,e}{r}\right)^r \left(\rm{r-1} \left(\frac{\E[\wt]}{\sf}\right)^{r-1}\right)^r\\
  &\leq (1 + \smallO(1)) \frac{1}{\sqrt{2 \pi r}} \left(\frac{\E[\wt]}{\sf}\right)^{r(r-1)-r(\beta_n ((r - 1) - (\tcn - 1)) + (\tcn - 1))}\\
\label{eq:expected_cliques_of_size_bound}
  &= (1 + \smallO(1)) \frac{1}{\sqrt{2 \pi r}} \left(\frac{\E[\wt]}{\sf}\right)^{-(1-\beta_n) \cdot r \bigl((\tcn - 1) - (r - 1)\bigr)}\,,
\end{align}
where $\beta_n$ comes from Lemma~\ref{lem:relative_moments_bound}. Using the definition of $\beta_n$ we have
\begin{equation}
\label{eq:maximal_slope_bound_applied}
\left(\frac{\sf}{\E[\wt]}\right)^{-(1-\beta_n)}
  = \left(\frac{\E[\wt]}{\sf}\right) \left(\frac{\sf / (1 + \delta)}{\E[\wt]}\right)
  = \frac{1}{1 + \delta} \,.
\end{equation}

Combining \eqref{eq:expected_cliques_of_size_bound} and \eqref{eq:maximal_slope_bound_applied} and because $r - \tcn \geq \epsilon$ we obtain
\begin{align}
\E[N_r \,|\, \te]
  &= (1 + \smallO(1)) \frac{1}{\sqrt{2 \pi r}} \left(\frac{1}{1 + \delta}\right)^{- r \bigl(\tcn - r\bigr)}\\
\label{eq:expected_cliques_of_size_bound_further}
  &\leq (1 + \smallO(1)) \frac{1}{\sqrt{2 \pi r}} \left(\frac{1}{1 + \delta}\right)^{r \, \epsilon} \,.
\end{align}

Since $\tcn \to \infty$ it is easily seen from \eqref{eq:expected_cliques_of_size_bound_further} that $\E[N_r \,|\, \te] \to 0$ when $r > \lfloor\tcn + \epsilon\rfloor$. Hence it follows from \eqref{eq:first_moment_method} that $\P(\cn(G_n) > \lfloor \tcn + \epsilon \rfloor) \to 0$. \hfill\qedsymbol

\subsubsection{Upper bound with bounded typical clique number}
\label{subsubsec:upper_bound_on_the_clique_number2}
In this section we prove the upper bound of Theorem~\ref{thm:clique_number_two_point_concentration} assuming that $\tcn$ is bounded. First we consider the case where $\tcn$ converges, in this case there exists an $\alpha > 0$ such that $\tcn = 1 / \alpha + 1 + \smallO(1)$. We want to apply all the steps in Section~\ref{subsubsec:upper_bound_on_the_clique_number}, but instead of conditioning on the event in \eqref{eq:truncation_event} we will condition on the event
\begin{equation}
\label{eq:truncation_event_increasing_delta}
\teinc = \left\{\max_{i \in [n]} \w_i \leq \frac{\sf}{1 + \eta}\right\} \,,
\end{equation}
where $\eta > 0$ comes from Assumption~\ref{ass:weights_probability_assumption_increasing_delta}.

Since $\tcn = 1 / \alpha + 1 + \smallO(1)$ it follows from Lemma~\ref{lem:typical_clique_number_bounded} that $\sf = n^{\alpha + \smallO(1)}$, and by Assumption~\ref{ass:weights_probability_assumption_increasing_delta} we have $\P(\teinc) \to 1$. Moreover, by repeating the steps in Lemmas \ref{lem:typical_clique_number_bounded} and \ref{lem:typical_clique_number_bounded_additional} it can easily be checked that replacing $\delta$ by $\eta$ in Definitions \ref{def:relative_moments} and \ref{def:typical_clique_number} the typical clique number remains equal to $\tcn = 1 / \alpha + 1 + \smallO(1)$. Therefore, we can follow all steps in Section~\ref{subsubsec:upper_bound_on_the_clique_number} but conditioning on $\teinc$ instead of $\te$. This gives
\begin{align}
\P(\cn(G_n) \geq r)
  &= (1 + \smallO(1)) \P(\cn(G_n) \geq r \,|\, \teinc)\\[1ex]
  &\leq (1 + \smallO(1)) \E[N_r \,|\, \teinc]\\
\label{eq:expected_cliques_of_size_bound2}
  &\leq (1 + \smallO(1)) \frac{1}{\sqrt{2 \pi r}} \left(\frac{1}{1 + \eta}\right)^{r \, \epsilon} \,.
\end{align}
Since $\eta > 0$ is arbitrary and $r = \tcn + \epsilon$ is bounded it follows from \eqref{eq:expected_cliques_of_size_bound2} that we can make $\P(\cn(G_n) \geq r)$ arbitrarily small, hence $\P(\cn(G_n) \geq r) \to 0$.

To complete the proof we consider the case when $\tcn$ does not converge. In this case, we know that every subsequence $\smash{\cramped{(n_i)_{i \in \mathbb{N}}}}$ contains a further subsequence $\smash{\cramped{(n_{i_j})_{j \in \mathbb{N}}}}$ along which $\smash{\cramped{\tcn[n_{i_j}]}}$ converges. Applying the arguments above shows that every subsequence $\smash{\cramped{(n_i)_{i \in \mathbb{N}}}}$ has a further subsequence $\smash{\cramped{(n_{i_j})_{j \in \mathbb{N}}}}$ along which $\P(\smash{\cn(\cramped{G_{n_{i_j}}})} \geq r) \to 0$, and it follows that $\P(\cn(G_n) \geq r) \to 0$. \hfill\qedsymbol

\subsubsection{Lower bound with diverging typical clique number}
\label{subsubsec:lower_bound_on_the_clique_number}
In this section we prove the lower bound of Theorem~\ref{thm:clique_number_two_point_concentration} assuming that $\tcn \to \infty$. Recall that $\cn(G_n)$ denotes the clique number of the graph $G_n$ and $N_r$ is the number of cliques of size $r$ in $G_n$. Then, by the second moment method, and using the truncation event $\te$ given by \eqref{eq:truncation_event} together with Assumption~\ref{ass:weights_probability_assumption},
\begin{align}
\P(\cn(G_n) < r)
  &= (1 + \smallO(1)) \P(\cn(G_n) < r \,|\, \te)\\[1ex]
  &= (1 + \smallO(1)) \P(N_r = 0 \,|\, \te)\\[1ex]
  &\leq (1 + \smallO(1)) \frac{\textup{Var}(N_r \,|\, \te)}{\E[N_r \,|\, \te]^2}\\
\label{eq:second_moment_method}
  &= (1 + \smallO(1)) \left(\frac{\E[N_r^2 \,|\, \te]}{\E[N_r \,|\, \te]^2} - 1\right) \,.
\end{align}
Hence we need to show that $\E[N_r^2 \,|\, \te] / \E[N_r \,|\, \te]^2 \to 1$ as $n \to \infty$, with $r = \lfloor \tcn - \epsilon \rfloor$.
The first moment of the number of cliques $N_r$ was computed in \eqref{eq:expected_cliques_of_size}, and is given by
\begin{equation}
\label{eq:expected_cliques_of_size2}
\E[N_r \,|\, \te]
  = \binom{n}{r} \E\left[\left(\frac{\wt}{\sf}\right)^{r-1}\right]^r \,.
\end{equation}
Similarly, the second moment of the number of cliques $N_r$ is also found using \eqref{eq:conditional_weight_distribution} and linearity of expectation as
\begin{align}
\hspace{20pt}&\hspace{-20pt}
\E[N_r^2 \,|\, \te]
  = \sum_{|C_1| = r,\,|C_2| = r} \P(C_1 \text{ and } C_2 \text{ are both cliques in } G_n \,|\, \te)\\
  &= \sum_{|C_1| = r,\,|C_2| = r} \E\left[\frac{\prod_{i<j \in C_1} \frac{\wt_i}{\sf} \cdot \frac{\wt_j}{\lambda_n} \prod_{i<j \in C_2} \frac{\wt_i}{\sf} \cdot \frac{\wt_j}{\sf}}{\prod_{i<j \in C_1 \cap C_2} \frac{\wt_i}{\sf} \cdot \frac{\wt_j}{\sf}}\right]\\
\label{eq:expected_double_cliques_of_size}
  &= \sum_{k=0}^r\;\sum_{\substack{|C_1| = r,\,|C_2| = r,\\|C_1 \cap C_2| = k}} \E\left[\left(\frac{\wt}{\sf}\right)^{r-1}\right]^{2(r-k)} \E\left[\left(\frac{\wt}{\sf}\right)^{2(r-1)-(k-1)}\right]^k\\
\label{eq:expected_double_cliques_of_size2}
  &= \sum_{k=0}^r \binom{n}{r}\binom{r}{k}\binom{n-r}{r-k} \, \E\left[\left(\frac{\wt}{\sf}\right)^{r-1}\right]^{2(r-k)} \E\left[\left(\frac{\wt}{\sf}\right)^{2(r-1)-(k-1)}\right]^k \,.
\end{align}
The third equality comes from counting how many times each vertex is an endpoint of an edge, and thus how many times each weight is present in the product. We count two cases separately:
\begin{itemize}
  \item Vertices in $C_1 \setminus C_2$ will need edges to each other vertex in $C_1$. So, each vertex in $C_1 \setminus C_2$ will be $r-1$ times in the product of \eqref{eq:expected_double_cliques_of_size} and similarly for vertices in $C_2 \setminus C_1$. Since we have $2(r - k)$ vertices in $C_1 \setminus C_2$ and $C_2 \setminus C_1$ we get the $\E[W^{r-1}]^{2(r-k)}$ term. See Figure~\ref{fig:double_clique_probability_left}.
  \item Vertices in $C_1 \cap C_2$ will need edges to each vertex in $C_1 \cup C_2$. So, each vertex in $C_1 \cap C_2$ will be $2(r-1)-(k-1)$ times in the product of \eqref{eq:expected_double_cliques_of_size} and we have $k$ vertices in $C_1 \cap C_2$. So we get the $\E[W^{2(r-1)-(k-1)}]^k$ term. See Figure~\ref{fig:double_clique_probability_right}.
\end{itemize}
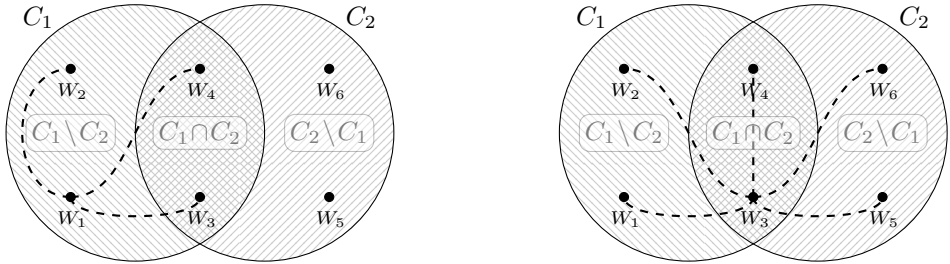
\begin{figure}[H]
\centering
\begin{subfigure}[t]{0.48\textwidth}
\centering
\begin{tikzpicture}[scale=0.85]
    \fill[pattern=north west lines,pattern color=white!80!black] (1,1) circle [radius=2];
    \fill[pattern=north east lines,pattern color=white!80!black] (3,1) circle [radius=2];
    \draw (1,1) circle [radius=2];
    \draw (3,1) circle [radius=2];
    \node at (-0.5,2.8) {$C_1$};
    \node at ( 4.5,2.8) {$C_2$};

    \draw[black,thick,dashed] (0,0) .. controls (-1,0) and (-1,2) .. (0,2);
    \draw[black,thick,dashed] (0,0) .. controls (0,-0.4) and (2,-0.4) .. (2,0);
    \draw[black,thick,dashed] (0,0) .. controls (1,0) and (1,2) .. (2,2);

    \filldraw (0,0) circle (2pt) node[align=left, below=2pt] {\scriptsize$\,\w_1$};
    \filldraw (0,2) circle (2pt) node[align=left, below=2pt] {\scriptsize$\,\w_2$};
    \filldraw (2,0) circle (2pt) node[align=left, below=2pt] {\scriptsize$\,\w_3$};
    \filldraw (2,2) circle (2pt) node[align=left, below=2pt] {\scriptsize$\,\w_4$};
    \filldraw (4,0) circle (2pt) node[align=left, below=2pt] {\scriptsize$\,\w_5$};
    \filldraw (4,2) circle (2pt) node[align=left, below=2pt] {\scriptsize$\,\w_6$};

    \node[fill=white,fill opacity=0.5,draw opacity=0.25,text opacity=1.0,draw,rounded corners,inner sep=2pt] at (0,1) {$C_1\!\setminus\!C_2$};
    \node[fill=white,fill opacity=0.5,draw opacity=0.25,text opacity=1.0,draw,rounded corners,inner sep=2pt] at (2,1) {$C_1\!\cap\!C_2$};
    \node[fill=white,fill opacity=0.5,draw opacity=0.25,text opacity=1.0,draw,rounded corners,inner sep=2pt] at (4,1) {$C_2\!\setminus\!C_1$};
\end{tikzpicture}
\caption{Example of all the edges connecting to a vertex in $C_1 \setminus C_2$.}
   \label{fig:double_clique_probability_left}
\end{subfigure}%
\hspace{0.04\textwidth}%
\begin{subfigure}[t]{0.48\textwidth}
\centering
\begin{tikzpicture}[scale=0.85]
    \fill[pattern=north west lines,pattern color=white!80!black] (1,1) circle [radius=2];
    \fill[pattern=north east lines,pattern color=white!80!black] (3,1) circle [radius=2];
    \draw (1,1) circle [radius=2];
    \draw (3,1) circle [radius=2];
    \node at (-0.5,2.8) {$C_1$};
    \node at ( 4.5,2.8) {$C_2$};

    \draw[black,thick,dashed] (2,0) to (2,2);
    \draw[black,thick,dashed] (2,0) .. controls (2,-0.4) and (0,-0.4) .. (0,0);
    \draw[black,thick,dashed] (2,0) .. controls (2,-0.4) and (4,-0.4) .. (4,0);
    \draw[black,thick,dashed] (2,0) .. controls (1,0) and (1,2) .. (0,2);
    \draw[black,thick,dashed] (2,0) .. controls (3,0) and (3,2) .. (4,2);

    \filldraw (0,0) circle (2pt) node[align=left, below=2pt] {\scriptsize$\,\w_1$};
    \filldraw (0,2) circle (2pt) node[align=left, below=2pt] {\scriptsize$\,\w_2$};
    \filldraw (2,0) circle (2pt) node[align=left, below=2pt] {\scriptsize$\,\w_3$};
    \filldraw (2,2) circle (2pt) node[align=left, below=2pt] {\scriptsize$\,\w_4$};
    \filldraw (4,0) circle (2pt) node[align=left, below=2pt] {\scriptsize$\,\w_5$};
    \filldraw (4,2) circle (2pt) node[align=left, below=2pt] {\scriptsize$\,\w_6$};

    \node[fill=white,fill opacity=0.5,draw opacity=0.25,text opacity=1.0,draw,rounded corners,inner sep=2pt] at (0,1) {$C_1\!\setminus\!C_2$};
    \node[fill=white,fill opacity=0.5,draw opacity=0.25,text opacity=1.0,draw,rounded corners,inner sep=2pt] at (2,1) {$C_1\!\cap\!C_2$};
    \node[fill=white,fill opacity=0.5,draw opacity=0.25,text opacity=1.0,draw,rounded corners,inner sep=2pt] at (4,1) {$C_2\!\setminus\!C_1$};
\end{tikzpicture}
\caption{Example of all the edges connecting to a vertex in $C_1 \cap C_2$.}
\label{fig:double_clique_probability_right}
\end{subfigure}%
\vspace*{-1.5ex}
\caption{Example of edges connecting to vertices in different parts of $C_1 \cup C_2$.}
\label{fig:double_clique_probability}
\end{figure}
Combining \eqref{eq:expected_cliques_of_size2} and \eqref{eq:expected_double_cliques_of_size2} we obtain
\begin{align}
\frac{\E\left[N_r^2 \,|\, \te\right]}{\E[N_r \,|\, \te]^2}
  &= \sum_{k = 0}^r \frac{\binom{r}{k}\binom{n-r}{r-k}}{\binom{n}{r}} \cdot \frac{\E\left[\left(\frac{\wt}{\sf}\right)^{r-1}\right]^{2(r-k)} \E\left[\left(\frac{\wt}{\sf}\right)^{2(r-1)-(k-1)}\right]^{k}}{\E\left[\left(\frac{\wt}{\sf}\right)^{r-1}\right]^{2r}}\\
  &= \sum_{k = 0}^r \frac{\binom{r}{k}\binom{n-r}{r-k}}{\binom{n}{r}} \left(\frac{\rm{2(r-1)-(k-1)}}{\rm{r-1}^2}\right)^k
  \left(\frac{\E[\wt]}{\sf}\right)^{-k(k-1)}\\
  &\leq 1 + \sum_{k = 1}^r \frac{\binom{r}{k}\binom{n-r}{r-k}}{\binom{n}{r}} \left(\frac{\rm{2(r-1)-(k-1)}}{\rm{r-1}^2}\right)^k
  \left(\frac{\E[\wt]}{\sf}\right)^{-k(k-1)}\\
\label{eq:second_moment_bound}
  &\leq 1 + \max_{1 \leq k \leq r}
  \underbrace{\,
  r\,\frac{\binom{r}{k}\binom{n-r}{r-k}}{\binom{n}{r}} \left(\frac{\rm{2(r-1)-(k-1)}}{\rm{r-1}^2}\right)^k
  \left(\frac{\E[\wt]}{\sf}\right)^{-k(k-1)}\,}_{\displaystyle \coloneqq b_k^{n,r}} .
\end{align}
We will show that $\max_{k \in [r]} b_k^{n,r} \to 0$ as $n \to \infty$. To continue we consider two cases: \emph{(i)} $k = r$; and \emph{(ii)} $1 \leq k \leq r-1$.

\emph{Case (i):} Here $k = r$, so we want to show that $b_r^{n,r} \to 0$ as $n \to \infty$. By definition of $b_r^{n,r}$ and by Lemmas \ref{lem:binomial_coefficient_stirling_approximation} and \ref{lem:relative_moments_bound},
\begin{align}
b_r^{n,r}
  &= r\,\frac{1}{\binom{n}{r}} \left(\frac{1}{\rm{r-1}}\right)^r\left(\frac{\sf}{\E[\wt]}\right)^{r(r-1)}\\
  &\leq (1 + \smallO(1))\,r\sqrt{2 \pi r}\left(\frac{r}{ne}\right)^r \left(\frac{1}{\rm{r-1}}\right)^r\left(\frac{\sf}{\E[\wt]}\right)^{r(r-1)}\\
  &\leq (1 + \smallO(1))\,r\sqrt{2 \pi r} \left(\frac{\sf}{\E[\wt]}\right)^{r\bigl((r-1) - \beta_n\bigl((r-1)-(\tcn-1)\bigr) - (\tcn - 1) \bigr)}\\
  &= (1 + \smallO(1))\,r\sqrt{2 \pi r} \left(\frac{\sf}{\E[\wt]}\right)^{-(1 - \beta_n) r\bigl((\tcn-1) - (r-1)\bigr)} \,.
\end{align}
where $\beta_n$ is given in Lemma~\ref{lem:relative_moments_bound}.

Using \eqref{eq:maximal_slope_bound_applied} together with the fact that $r = \lfloor \tcn - \epsilon \rfloor \leq \tcn - \epsilon$ yields the bound
\begin{align}
b_r^{n,r}
  &\leq (1 + \smallO(1))\,r\sqrt{2 \pi r} \left(\frac{\sf}{\E[\wt]}\right)^{-(1 - \beta) r\bigl((\tcn-1) - (r-1)\bigr)}\\
\label{eq:bounded_br2}
  &\leq (1 + \smallO(1))\,r\sqrt{2 \pi r} \left(\frac{1}{1 + \delta}\right)^{\epsilon r} .
\end{align}
Since $\tcn \to \infty$ it is easily seen from \eqref{eq:bounded_br2} that $b_r^{n,r} \to 0$.

\emph{Case (ii):} Here we must show that $\max_{k \in [r-1]} b_k^{n,r} \to 0$ as $n \to \infty$. First we apply Lemma~\ref{lem:binomial_coefficient_stirling_approximation} on the binomial coefficients, which gives
\begin{align}
\frac{\binom{r}{k}\binom{n-r}{r-k}}{\binom{n}{r}}
  &= (1 + \smallO(1)) \sqrt{\frac{r}{2\pi k(r - k)}} \left(\frac{r\,e}{k}\right)^k \left(\frac{(n-r)\,e}{r-k}\right)^{r-k} \left(\frac{n\,e}{r}\right)^{-r}\\
  &= (1 + \smallO(1)) \sqrt{\frac{r}{2\pi k(r - k)}} \left(\frac{r\,e}{k}\right)^k \left(\frac{n-r}{n}\,\frac{r}{r-k}\right)^r \left(\frac{r-k}{(n-r)\,e}\right)^k\\
\label{eq:hypergeometric_density_bound}
  &= (1 + \smallO(1)) \sqrt{\frac{r}{2\pi k(r - k)}} \left(\frac{r\,e}{k}\right)^k \left(\frac{r-k}{n-r}\right)^k .
\end{align}
Now, for all $1 \leq k \leq r - 1$ we have that $k \leq \tcn - \epsilon - 1 \leq \tcn - 2\epsilon$ and therefore $2(r-1)-(k-1) \geq \tcn - 1$. So, we can apply Lemma~\ref{lem:relative_moments_bound} on both $\rm{2(r-1)-(k-1)}$ and on $\rm{r-1}$, yielding
\begin{align}
\hspace{55pt}&\hspace{-55pt}
\left(\frac{\rm{2(r-1)-(k-1)}}{\rm{r-1}^2}\right)^k
\left(\frac{\sf}{\E[\wt]}\right)^{k(k-1)}\\
  &\leq \left(\frac{2(r-1)-(k-1)+1}{n\,e}\right)^k \left(\frac{\vphantom{(}n\,e}{r}\right)^{2k} \left(\frac{\sf}{\E[\wt]}\right)^{k(k-1)}\\
  &\qquad{\scriptstyle\times}\,
  \left(\frac{\sf}{\E[\wt]}\right)^{k\bigl(\beta_n(2(r-1)-(k-1)-(\tcn-1)) + (\tcn - 1)\bigr)}\\
  &\qquad{\scriptstyle\times}\,
  \left(\frac{\sf}{\E[\wt]}\right)^{-2k\bigl(\beta_n((r-1)-(\tcn-1)) + (\tcn-1)\bigr)}\\
\label{eq:maximal_slope_bound_applied_extended}
  &= \left(\frac{2(r-1)-(k-1)+1}{r^2}\,ne\right)^k\,\left(\frac{\sf}{\E[\wt]}\right)^{-(1 - \beta_n) \cdot k\bigl((\tcn-1)-(k-1)\bigr)} \,. \quad
\end{align}
Combining \eqref{eq:hypergeometric_density_bound} and \eqref{eq:maximal_slope_bound_applied_extended} we obtain
\begin{align}
b_k^{n,r}
  &\leq (1 + \smallO(1)) \, r \sqrt{\frac{r}{2\pi k(r - k)}} \left(\frac{r-k}{k} \, \frac{2(r-1)-(k-1)+1}{r} \, e^2\right)^k\\
  &\qquad{\scriptstyle\times}\,
  \left(\frac{\sf}{\E[\wt]}\right)^{-(1 - \beta_n)\cdot k\bigl((\tcn-1)-(k-1)\bigr)}\\
  &\leq (1 + \smallO(1)) \, r \sqrt{\frac{r}{2\pi k(r - k)}} \left(\frac{r-k}{k} \, 2 e^2\right)^{k} \left(\frac{\sf}{\E[\wt]}\right)^{\!-(1 - \beta_n) \cdot k\bigl((\tcn-1)-(k-1)\bigr)} .
\end{align}
Here we can use \eqref{eq:maximal_slope_bound_applied} again. This gives
\begin{align}
b_k^{n,r}
  &\leq (1 + \smallO(1)) \, r \sqrt{\frac{r}{2\pi k(r - k)}} \left(\frac{r-k}{k} \, 2 e^2\right)^{k} \left(\frac{\sf}{\E[\wt]}\right)^{-(1 - \beta_n) \cdot k\bigl((\tcn-1)-(k-1)\bigr)}\\
  &\leq (1 + \smallO(1)) \, r \sqrt{\frac{r}{2\pi k(r - k)}} \left(\frac{r-k}{k} \, 2 e^2\right)^{k} \left(\frac{1}{1 + \delta}\right)^{k\bigl((\tcn-1)-(k-1)\bigr)}\\
\label{eq:bounded_bk2}
  &= (1 + \smallO(1)) \, r \sqrt{\frac{r}{2\pi k(r - k)}} \left(2 e^2 \left(\frac{r}{k}-1\right)\left(\frac{1}{1 + \delta}\right)^{\tcn-k}\right)^{k} .
\end{align}
Fix $\zeta \in \smash{\bigl(0, (1 + 2e^2)^{-1}\bigr)}$ and recall that $\tcn \to \infty$. Then it can easily be seen from \eqref{eq:bounded_bk2} that $\max_{1 \leq k \leq (1-\zeta)r} b_k^{n,r} \to 0$ since $(1 + \delta)^{-(\tcn - k)} \to 0$ exponentially, eventually dominating the other terms. Finally, to show that $\max_{(1-\zeta)r \leq k \leq r} b_k^{n,r} \to 0$, note that $2 e^2 (r / k - 1) < 1$ and therefore $\bigl(2 e^2 (r / k-1)(1 + \delta)^{-(\tcn-k)}\bigr)^{k} \to 0$ exponentially, again dominating the remaining terms.

Hence $\max_{k \in [r]} b_k^{n,r} \to 0$ as $n \to \infty$ and $r = \lfloor \tcn - \epsilon \rfloor$. Using \eqref{eq:second_moment_bound} and \eqref{eq:second_moment_method} it follows that $\P(\cn(G_n) < \lfloor \tcn - \epsilon \rfloor) \to 0$ as $n \to \infty$. \hfill\qedsymbol

\subsubsection{Lower bound with bounded typical clique number}
\label{subsubsec:lower_bound_on_the_clique_number2}
In this section we prove the lower bound of Theorem~\ref{thm:clique_number_two_point_concentration} assuming that $\tcn$ is bounded. First we consider the case where $\tcn$ converges, in this case there exists an $\alpha > 0$ such that $\tcn = 1 / \alpha + 1 + \smallO(1)$. We want to apply all the steps in Section~\ref{subsubsec:lower_bound_on_the_clique_number}, but instead of conditioning on the event $\te$ given in \eqref{eq:truncation_event} we will condition on the event $\teinc$ given in \eqref{eq:truncation_event_increasing_delta}. As shown in Section~\ref{subsubsec:upper_bound_on_the_clique_number2}, the typical clique number $\tcn$ is unaffected by this change, and by Assumption~\ref{ass:weights_probability_assumption_increasing_delta} we also have $\P(\teinc) \to 1$.

Now, following all steps in Section~\ref{subsubsec:upper_bound_on_the_clique_number} but conditioning on $\teinc$ instead of $\te$ we obtain
\begin{align}
\P(\cn(G_n) < r)
  &= (1 + \smallO(1)) \P(\cn(G_n) < r \,|\, \teinc)
  \leq (1 + \smallO(1)) \frac{\E[N_r^2 \,|\, \teinc]}{\E[N_r \,|\, \teinc]^2} - 1 \,.\\
\intertext{By combining \eqref{eq:bounded_br2} and \eqref{eq:bounded_bk2}, and using that $r = \tcn - \epsilon$ is bounded we get}
\P(\cn(G_n) < r)
  &\leq (1 + \smallO(1)) \frac{\E[N_r^2 \,|\, \teinc]}{\E[N_r \,|\, \teinc]^2} - 1
  \leq \bigO(1) \, \left(\frac{1}{1 + \eta}\right)^{\epsilon} \,.
\end{align}
Since we can make $\eta > 0$ arbitrarily large it follows that $\P(\cn(G_n) < r) \to 0$.

To complete the proof we consider the case when $\tcn$ does not converge. In this case, we know that every subsequence $\smash{\cramped{(n_i)_{i \in \mathbb{N}}}}$ contains a further subsequence $\smash{\cramped{(n_{i_j})_{j \in \mathbb{N}}}}$ along which $\smash{\cramped{\tcn[n_{i_j}]}}$ converges. Applying the arguments above shows that every subsequence $\smash{\cramped{(n_i)_{i \in \mathbb{N}}}}$ has a further subsequence $\smash{\cramped{(n_{i_j})_{j \in \mathbb{N}}}}$ along which $\P(\smash{\cn(\cramped{G_{n_{i_j}}})} < r) \to 0$, and it follows that $\P(\cn(G_n) < r) \to 0$. \hfill\qedsymbol

\appendix
\section{Derivation of examples}
\label{sec:derivation_of_examples}
In this section we derive the asymptotic behavior of the typical clique number $\tcn$ for some given weight distributions $\w$ and scalings $\sf$. This can be very difficult in general, but for several choices of weights good asymptotic characterizations can be given. An overview of these results can be found in Tables \ref{tbl:typical_clique_number_short_tailed}, \ref{tbl:typical_clique_number_light_tailed1}, and \ref{tbl:typical_clique_number_light_tailed2}.

Throughout the derivation of the examples below we make use of the Lambert-W functions, which are obtained from the solutions in $y \in \mathbb{R}$ of the equation
\begin{equation}
\label{eq:lambert_w}
x = y \, e^y \,,
\end{equation}
\noeqref{eq:lambert_w}%
When $x \geq 0$ this has a unique real solution, while for $x \in (-1/e, 0)$ there are two real solutions. This gives rise to two branches: the \emph{principal branch}, denoted by $\lambertW_0 : [-1/e,\infty) \mapsto [-1,\infty)$ and the \emph{lower branch}, denoted by $\lambertW_{-1} : [-1/e,0) \mapsto (-\infty,-1]$. For an overview of this function and its properties see \cite{Corless1996}.

\subsection{Bernoulli weights}
\label{subsec:bernoulli_weights}
Let $\w$ have a Bernoulli distribution with parameter $p$, that is $\w \sim \text{Ber}(p)$, and take any scaling $\sf \geq c > 1$. In this case, we have an Erd\H{o}s--R\'{e}nyi random graph with connection probability $\sf^{-2}$ on approximately $n p$ vertices, with all remaining vertices being isolated. Therefore, by \eqref{eq:typical_clique_number_erdos_renyi}, we expect the typical clique number $\tcn$ to be
\begin{equation}
\tcn = \log_{\sf}(n\,p) - \log_{\sf}\log_{\sf}(n\,p) + \log_{\sf}(e) + 1 + \smallO(1)\,.
\end{equation}
In this section, we show that the same result is obtained by solving \eqref{eq:typical_clique_number} from Definition~\ref{def:typical_clique_number}.

The relative moments from Definition~\ref{def:relative_moments} are given by
\begin{equation}
\rm{r-1}
  = \frac{\E\bigl[\wt^{r-1}\bigr]}{\E\bigl[\wt\bigr]^{r-1}}
  = \frac{\E\bigl[\w^{r-1}\bigr]}{\E\bigl[\w\bigr]^{r-1}}
  = p^{2-r}\,.
\end{equation}
The typical clique number $\tcn$ from Definition~\ref{def:typical_clique_number} is given by the solution in $r$ of
\begin{equation}
r = \frac{\log(n) - \log(r) + (2-r)\log(p) + 1}{\log\bigl(\sf \,/\, p\bigr)} + 1\,.
\end{equation}
Solving this we obtain
\begin{equation}
\tcn = \frac{\lambertW_0(n\,p\,e\,\sf\log(\sf))}{\log(\sf)}\,.
\end{equation}
where $\lambertW_0$ denotes the principal branch of the Lambert-W function, see \eqref{eq:lambert_w}. We can simplify the solution above using the approximation $\lambertW_0(x) = \log(x) - \log\log(x) + \smallO(1)$ as $x \to \infty$ from \cite{Corless1996}. 
This gives
\begin{align}
\tcn
  &= \frac{\log(n\,p\,e\,\sf\log(\sf)) - \log\log(n\,p\,e\,(\sf)\log(\sf))}{\log(\sf)} + \smallO(1)\\
  &= \frac{\log(n\,p\,e\,\sf\log(\sf)) - \log\log(n\,p)}{\log(\sf)} + \smallO(1)\\
  &= \log_{\sf}(n\,p\,\log(\sf)) - \log_{\sf}\log(n\,p) + \log_{\sf}(e) + 1 + \smallO(1)\\[0.5ex]
  &= \log_{\sf}(n\,p) - \log_{\sf}\log_{\sf}(n\,p) + \log_{\sf}(e) + 1 + \smallO(1)\,,
\end{align}
which is exactly the expected solution.

\subsection{Beta weights}
\label{subsec:beta_weights}
Let $\w$ have a beta distribution with parameters $\alpha > 0$ and $\beta > 0$, that is $\w \sim \text{Beta}(\alpha, \beta)$, and take any scaling $\sf \geq c > 1$. Then the relative moments from Definition~\ref{def:relative_moments} are given by
\begin{align}
\rm{r-1}
  &= \frac{\E\bigl[\wt^{r-1}\bigr]}{\E\bigl[\wt\bigr]^{r-1}}
  = \frac{\E\bigl[\w^{r-1}\bigr]}{\E\bigl[\w\bigr]^{r-1}}
  = \prod_{r=0}^{r-2} \frac{\alpha + r}{\alpha + \beta + r} \,\Big/ \left(\frac{\alpha}{\alpha + \beta}\right)^{r-1}\\
  &= \frac{\Gamma(\alpha + r - 1)}{\Gamma(\alpha + \beta + r - 1)} \cdot \frac{\Gamma(\alpha + \beta)}{\Gamma(\alpha)} \cdot \left(\frac{\alpha}{\alpha + \beta}\right)^{r-1} \,.
\end{align}
Using Stirling's approximation, the above can be simplified for large $r$. This gives
\begin{equation}
\log(\rm{r-1})
  = - \beta \log(r) + \log\bigl(\Gamma(\alpha + \beta) / \Gamma(\alpha)\bigr) + (r-1)\log\left(\frac{\alpha}{\alpha + \beta}\right) + \smallO(1)\,.
\end{equation}
Therefore, the typical clique number $\tcn$ from Definition~\ref{def:typical_clique_number} is given by the solution in $r$ of
\begin{equation}
r = \frac{\log(n) - (\beta + 1)\log(r) + (r-1)\log\bigl(\frac{\alpha}{\alpha + \beta}\bigr) + \log\bigl(\Gamma(\alpha + \beta) / \Gamma(\alpha)\bigr) + 1}{\log\Bigl(\sf / \bigl(\frac{\alpha}{\alpha + \beta}\bigr)\Bigr)} + 1 + \smallO(1) \,.
\end{equation}
Solving this we obtain
\begin{equation}
\tcn = \frac{(1 + \beta)\,\lambertW_0\biggl(\frac{\bigl(n\,e\,\sf\,\Gamma(\alpha + \beta) / \Gamma(\alpha)\bigr)^{\frac{1}{1 + \beta}} \log(\sf)}{1 + \beta}\biggr)}{\log(\sf)} + \smallO(1)\,,
\end{equation}
As in the previous example, using the approximation $\lambertW_0(x) = \log(x) - \log\log(x) + \smallO(1)$, we obtain
\begin{align}
\tcn
  &= \frac{(1 + \beta) \log\Biggl(\frac{\bigl(n\,e\,\sf\,\Gamma(\alpha + \beta) / \Gamma(\alpha)\bigr)^{\frac{1}{1 + \beta}} \log(\sf)}{1 + \beta}\Biggr)}{\log(\sf)}\\
  &\qquad\qquad{}-\frac{(1 + \beta) \log\log\Biggl(\frac{\bigl(n\,e\,\sf\,\Gamma(\alpha + \beta) / \Gamma(\alpha)\bigr)^{\frac{1}{1 + \beta}} \log(\sf)}{1 + \beta}\Biggr)}{\log(\sf)} + \smallO(1)\\
  &= \frac{\log\bigl(n\,e\,\sf\,\Gamma(\alpha + \beta) / \Gamma(\alpha)\bigr) + (1 + \beta) \log\Bigl(\frac{\log(\sf)}{1 + \beta}\Bigr) - (1 + \beta) \log\log(n)}{\log(\sf)}  + \smallO(1)\\
  &= \log_{\sf}\bigl(n\,e\,\Gamma(\alpha + \beta) / \Gamma(\alpha)\bigr) - (1 + \beta) \log_{\sf}\bigl( (1 + \beta) \log_{\sf}(n)\bigr) + 1 + \smallO(1)\\[0.5ex]
  &= \log_{\sf}(n) - (1 + \beta) \log_{\sf}\bigl( (1 + \beta) \log_{\sf}(n)\bigr)\\
  &\qquad\qquad{} + \log_{\sf}(e) + \log_{\sf}\bigl(\Gamma(\alpha + \beta) / \Gamma(\alpha)\bigr) + 1 + \smallO(1) \,.
\end{align}

\subsection{Gamma weights}
\label{subsec:gamma_weights}
Let $\w$ have a Gamma distribution with shape $\alpha$ and rate $\beta$, that is $\w \sim \text{Gamma}(\alpha, \beta)$. First we assume that truncating the weight distribution has asymptotically almost no effect on the relative moments from Definition~\ref{def:relative_moments}. We begin by assuming that
\begin{equation}
\label{eq:assumption_rm_gamma}
\rm{r-1}
  = \frac{\E\bigl[\wt^{r-1}\bigr]}{\E\bigl[\wt\bigr]^{r-1}}
  = (1 + \smallO(1)) \, \frac{\E\bigl[\w^{r-1}\bigr]}{\E\bigl[\w\bigr]^{r-1}} \,,
\end{equation}
for all $r \leq \tcn$, and use this to find the typical clique number $\tcn$. After that, we will show that the assumption in \eqref{eq:assumption_rm_gamma} is valid. By the assumption in \eqref{eq:assumption_rm_gamma}
\begin{equation}
\rm{r-1}
  = (1 + \smallO(1)) \, \frac{\E\bigl[\w^{r-1}\bigr]}{\E\bigl[\w\bigr]^{r-1}}
  = (1 + \smallO(1)) \, \frac{\Gamma(\alpha+r-1)}{\Gamma(\alpha)\,\alpha^{r-1}} \,.
\end{equation}
To satisfy Assumption~\ref{ass:weights_probability_assumption} we must have $\sf \to \infty$, and therefore
\begin{equation}
\frac{\log(\rm{r-1})}{\log(\sf / \E[\wt])}
  = \frac{\log(\Gamma(\alpha+r-1)) - \log(\Gamma(\alpha)) - (r-1) \log(\alpha)}{\log(\sf / \E[\wt])} + \smallO(1) \,.
\end{equation}

Using Stirling's approximation again relying on the fact that $r$ is large, the typical clique number $\tcn$ is given by the solution in $r$ of
\begin{align}
r
  &= \frac{\log(n) - \log(r) + \log(\Gamma(\alpha+r-1)) - \log(\Gamma(\alpha)) - \left(r-1\right)\log(\alpha) + 1}{\log\bigl(\sf \, \beta / \alpha\bigr)} + 1 + \smallO(1)\\
  &= \frac{\log(n) - \log(r) + \bigl(\alpha+r-\frac{3}{2}\bigr)\log(\alpha+r-2) + 2}{\log\bigl(\sf \, \beta / \alpha\bigr)}\\
  &\qquad\qquad{} - \frac{(\alpha+r-2) + \log(\Gamma(\alpha)) + (r-1)\log(\alpha) + 2}{\log\bigl(\sf \, \beta / \alpha\bigr)} + 1 + \smallO(1)\\
  &= \frac{\log(n) + \bigl(\alpha+r-\frac{5}{2}\bigr)\log\bigl(\alpha+r-\frac{5}{2}\bigr)}{\log\bigl(\sf \, \beta / \alpha\bigr)}\\
  &\qquad\qquad{} - \frac{\bigl(\alpha+r-\frac{5}{2}\bigr) + \log(\Gamma(\alpha)) + (r-1)\log(\alpha)}{\log\bigl(\sf \, \beta / \alpha\bigr)}+ 1 + \smallO(1)\,.
\end{align}
Substituting $x = r + \alpha - 5/2$, we get
\begin{equation}
x = \frac{\log(n) + \bigl(\alpha - x - \frac{3}{2}\bigr)\log(\alpha) + x\log(x) - x - \log(\Gamma(\alpha)) + 2}{\log\bigl(\sf \, \beta / \alpha\bigr)} - \frac{3}{2} + \alpha + \smallO(1)\,.
\end{equation}
Solving for $x$ we find $\tcn + \alpha - 5/2$, and therefore the typical clique number $\tcn$ is given by
\begin{equation}
\label{eq:typical_clique_number_gamma_general}
\tcn = \frac{2\log(n)-(3-2\alpha)\log(\beta \, \sf)+4-2\log(\Gamma(\alpha))}{-\lambertW_{-1}\left(-\frac{2\log(n)-(3-2\alpha)\log(\beta \, \sf)+4-2\log(\Gamma(\alpha))}{2\,e\,\beta\,\sf}\right)} + \frac{5}{2} - \alpha + \smallO(1) \,,
\end{equation}
where $\lambertW_{-1}$ denotes the lower branch of the Lambert-W function, see \eqref{eq:lambert_w}.

\subsubsection{\texorpdfstring{First scaling: $\sf = (1+\phi)\log(n) / \beta$}{First scaling}}
\label{subsubsec:gamma_first_scaling}
Let $\sf = (1+\phi)\log(n) / \beta$, with $\phi > 0$. We will show that in this case \eqref{eq:typical_clique_number_gamma_general} simplifies to the result in Table~\ref{tbl:typical_clique_number_light_tailed1}. This gives
\begin{align}
\label{eq:typical_clique_number_gamma1}
\tcn
  &= \frac{\log(n) - \bigl(\frac{3}{2}-\alpha\bigr)\log((1+\phi)\log(n)) + 2 - \log(\Gamma(\alpha))}{-\lambertW_{-1}\left(-\frac{\log(n) - \bigl(\frac{3}{2}-\alpha\bigr)\log((1+\phi)\log(n)) + 2 - \log(\Gamma(\alpha))}{e \, (1+\phi)\log(n)}\right)} + \frac{5}{2} - \alpha + \smallO(1)\\
  &= \frac{\log(n) - \bigl(\frac{3}{2}-\alpha\bigr)\log((1+\phi)\log(n)) + 2 - \log(\Gamma(\alpha))}{-\lambertW_{-1}\left(-\frac{1}{e \, (1+\phi)}\right) + \smallO(1)} + \frac{5}{2} - \alpha + \smallO(1)\\
  &= (1 + \smallO(1))\,\frac{\log(n)}{-\lambertW_{-1}\left(-\frac{1}{e (1+\phi)}\right)}\,.
\end{align}

It remains to show that our assumption from \eqref{eq:assumption_rm_gamma} holds. We will do this in two parts: \emph{(i)} where we show $\smash{\E\bigl[\wt^{r-1}\bigr] / \E\bigl[\w^{r-1}\bigr]} \to 1$ for any $r \leq \tcn$; and \emph{(ii)} where we show $\smash{(\E[\wt] / \E[\w])^{r-1}} \to 1$ for any $r \leq \tcn$. First, observe that for any $k \geq 1$ we have
\begin{equation}
\label{eq:truncated_r1_moment_ratio_gamma}
\frac{\E\bigl[\wt^k\bigr]}{\E\bigl[\w^k\bigr]}
  = \frac{\E\bigl[\w^k \,\big|\, W \leq \frac{\sf}{1 + \delta}\bigr]}{\E\bigl[\w^k\bigr]}
  = \frac{1}{\P\bigl(W \leq \frac{\sf}{1 + \delta}\bigr)} \, \frac{\int_0^{\frac{\sf}{1 + \delta}} x^k f_{W}(x) dx}{\int_0^{\infty\;\;} x^k f_{W}(x) dx}
  = \frac{\P\bigl(Z_{k} \leq \frac{\sf}{1 + \delta}\bigr)}{\P\bigl(W \leq \frac{\sf}{1 + \delta}\bigr)} \,,
\end{equation}
where $Z_k \sim \text{Gamma}(\alpha + k, \beta)$.

\emph{Part (i): } 
To simplify notation, let $a \coloneqq (1+\phi)/(1+\delta)$, $b \coloneqq -1 / \lambertW_{-1}\left(-1 / (e (1+\phi))\right)$, and $z_n \coloneqq \omega_n + \alpha - 1 = (1 + \smallO(1)) b \log(n) + \alpha - 1$. Note that, because $\phi > \delta > 0$ we have
\begin{equation}
a = \frac{1+\phi}{1+\delta}
> 1
> \left(-\lambertW_{-1}\left(-\frac{1}{e (1+\phi)}\right)\right)^{-1}
= b \,.
\end{equation}
Finally, let $X_i \sim \text{Exp}(\beta)$. Then using \eqref{eq:truncated_r1_moment_ratio_gamma} and Assumption~\ref{ass:weights_probability_assumption} we have
\begin{align}
\frac{\E\bigl[\wt^{r-1}\bigr]}{\E\bigl[\w^{r-1}\bigr]}
  &= (1 + \smallO(1)) \, \P\Bigl(Z_{r-1} \leq \frac{\sf}{1 + \delta}\Bigr)\\
  &\geq (1 + \smallO(1)) \, \P\biggl(\sum_{i = 1}^{\lceil z_n \rceil} X_i \leq \frac{a}{\beta} \log(n) \biggr)\\
  &= (1 + \smallO(1)) \biggl(1 - \P\biggl(\frac{1}{\lceil z_n \rceil} \sum_{i = 1}^{\lceil z_n \rceil} X_i > (1 + \smallO(1)) \, \frac{1}{\beta} \, \frac{a}{b}\biggr)\biggr)\\
  \label{eq:weighted_gamma_chernoff_bound_gamma}
  &\geq (1 + \smallO(1)) \biggl(1 - \exp\biggl(- \lceil z_n \rceil \, I\Bigl((1 + \smallO(1)) \, \frac{1}{\beta} \, \frac{a}{b}\Bigr)\biggr)\biggr) \,,
\end{align}
where $I(x) \coloneqq x\beta - 1 - \log(x \beta)$ is the rate function of an exponential distribution with rate $\beta$. Hence, for $n$ large enough and because $a / b > 1$ we have
\begin{equation}
\label{eq:rate_bound_gamma}
I\Bigl((1 + \smallO(1)) \, \frac{1}{\beta} \, \frac{a}{b}\Bigr)
= (1 + \smallO(1)) \, I\Bigl(\frac{1}{\beta} \, \frac{a}{b}\Bigr)
= (1 + \smallO(1)) \, \Bigl((a / b) - 1 - \log(a / b)\Bigr)
> 0 \,.
\end{equation}
Combining \eqref{eq:weighted_gamma_chernoff_bound_gamma} and \eqref{eq:rate_bound_gamma} we see that $\smash{\E\bigl[\wt^{r-1}\bigr] / \E\bigl[\w^{r-1}\bigr]} \to 1$.

\emph{Part (ii): } 
From \eqref{eq:truncated_r1_moment_ratio_gamma} and integration by parts we obtain
\begin{align}
\frac{\E\bigl[\wt\bigr]}{\E\bigl[\w\bigr]}
  &= \frac{\P\bigl(Z_{1} \leq \frac{\sf}{1 + \delta}\bigr)}{\P\bigl(W \leq \frac{\sf}{1 + \delta}\bigr)}
  = \frac{\gamma(1 + \alpha, \beta \sf / (1 + \delta))}{\alpha \, \gamma(\alpha, \beta \sf / (1 + \delta))}\\
  &= 1 - \frac{(\beta \sf / (1 + \delta))^{\alpha} \exp(-\beta \sf / (1 + \delta))}{\alpha \, \gamma(\alpha, \beta \sf / (1 + \delta))}\\
  &= 1 - \bigO(1) \frac{\log(n)^{\alpha}}{n^a} \,,
\end{align}
where $\gamma(\cdot,\cdot)$ is the lower incomplete gamma function and we recall that $a = (1+\phi)/(1+\delta) > 1$. Hence, we have $\smash{(\E[\wt] / \E[\w])^{r-1}} \to 1$ for any $r \leq n$.

From parts \emph{(i)} and \emph{(ii)} we see that our assumption in \eqref{eq:assumption_rm_gamma} indeed holds.

\subsubsection{\texorpdfstring{Second scaling: $\sf = (1+\phi)\log(n) / \beta$}{Second scaling}}
\label{subsubsec:gamma_second_scaling}
Let $\sf = \log(n)^{1+\phi} / \beta$, with $\phi > 0$. We will show that in this case \eqref{eq:typical_clique_number_gamma_general} simplifies to the result in Table~\ref{tbl:typical_clique_number_light_tailed2}. This gives
\begin{align}
\label{eq:typical_clique_number_gamma2}
\tcn
  &= \frac{\log(n) - \bigl(\frac{3}{2}-\alpha\bigr)(1+\phi)\log\log(n) + 2 - \log(\Gamma(\alpha))}{-\lambertW_{-1}\left(-\frac{\log(n) - \bigl(\frac{3}{2}-\alpha\bigr)(1+\phi)\log\log(n) + 2 - \log(\Gamma(\alpha))}{e \, \log(n)^{1+\phi}}\right)} + \frac{5}{2} - \alpha + \smallO(1)\\
  &= \frac{\log(n) - \bigl(\frac{3}{2}-\alpha\bigr)(1+\phi)\log\log(n) + 2 - \log(\Gamma(\alpha))}{-\lambertW_{-1}\left(-\frac{1}{e \, \log(n)^{\phi}}\right) + \smallO(1)} + \frac{5}{2} - \alpha + \smallO(1)\\
  &= (1 + \smallO(1)) \, \frac{\log(n)}{\log\left(e \, \log(n)^{\phi}\right)}
  = (1 + \smallO(1)) \, \frac{1}{\phi} \, \frac{\log(n)}{\log\log(n)}\,.
\end{align}

Compared to Section~\ref{subsubsec:gamma_first_scaling} the scaling $\sf$ is larger and the typical clique number $\tcn$ is smaller. Therefore, it is evident that our assumption from \eqref{eq:assumption_rm_gamma} is also valid in this case.

\subsection{Half-normal weights}
\label{subsec:half-normal_weights}
Let $\w$ have a half-normal distribution with parameters $\mu = 0$ and $\sigma > 0$, that is $\w \sim |X|$, where $X \sim \textup{N}(0, \sigma)$. We proceed in the exact same way as for the Gamma distribution, and assume first that
\begin{equation}
\label{eq:assumption_rm_half_normal}
\rm{r-1}
  = \frac{\E\bigl[\wt^{r-1}\bigr]}{\E\bigl[\wt\bigr]^{r-1}}
  = (1 + \smallO(1)) \, \frac{\E\bigl[\w^{r-1}\bigr]}{\E\bigl[\w\bigr]^{r-1}} 
  = (1 + \smallO(1)) \, \pi^{\frac{r}{2}-1} \, \Gamma(r / 2) \,,
\end{equation}
for all $r \leq \tcn$. To satisfy Assumption~\ref{ass:weights_probability_assumption} we must have $\sf \to \infty$, and therefore
\begin{equation}
\frac{\log(\rm{r-1})}{\log(\sf / \E[\wt])}
= \frac{\left(\frac{r}{2}-1\right)\log(\pi) + \log(\Gamma(r / 2))}{\log(\sf / \E[\wt])} + \smallO(1) \,.
\end{equation}

Using Stirling's approximation and the fact that the typical clique number $\tcn$ grows with $n$, the typical clique number $\tcn$ is given by the solution in $r$ of
\begin{align}
r
  &= \frac{\log(n) - \log(r) + \left(\frac{r}{2}-1\right)\log(\pi) + \log\left(\Gamma\left(\frac{r}{2}\right)\right) + 1}{\log\bigl(\sf / \sqrt{2 \sigma^2/\pi}\bigr)} + 1 + \smallO(1)\\
  &= \frac{\log(n) - \log(r) + \left(\frac{r}{2}-1\right)\log(\pi) + \left(\frac{r}{2}-\frac{1}{2}\right)\log\left(\frac{r}{2}-1\right) - \frac{r}{2} + 3}{\log\bigl(\sf / \sqrt{2 \sigma^2/\pi}\bigr)} + 1 + \smallO(1)\\
  &= \frac{\log(n) + \left(\frac{r-3}{2}\right)\log\left(\frac{r-3}{2}\right) + \left(\frac{r-3}{2}\right)\bigl(\log(\pi) - 1\bigr) + \log\left(\frac{e^2\sqrt{\pi}}{2}\right)}{\log\bigl(\sf / \sqrt{2\sigma^2/\pi}\bigr)} + 1 + \smallO(1) \,.
\end{align}
Substituting $x = (r-3) / 2$, we get
\begin{equation}
x = \frac{1}{2}\,\frac{\log(n) + x\log(x) + x(\log(\pi) - 1) + \log\left(\frac{e^2\sqrt{\pi}}{2}\right)}{\log\bigl(\sf / \sqrt{2\sigma^2/\pi}\bigr)} - 1 + \smallO(1)\,.
\end{equation}
Solving for $x$ we find $(\tcn-3) / 2$, and therefore the typical clique number $\tcn$ is given by
\begin{equation}
\label{eq:typical_clique_number_half_normal_general}
\tcn =   \frac{2\log(n)-4\log(\sf)+4-\log(\pi)}{-\lambertW_{-1}\left(-\frac{2\log(n)-4\log(\sf)+4-\log(\pi)}{e \, \sf^2}\right)} + 3 + \smallO(1)\,,
\end{equation}
where $\lambertW_{-1}$ denotes the lower branch of the Lambert-W function, see \eqref{eq:lambert_w}.

\subsubsection{\texorpdfstring{First scaling: $\sf = (1+\phi)\sigma\sqrt{2 \log(n)}$}{First scaling}}
\label{subsubsec:half_normal_first_scaling}
Let $\sf = (1+\phi)\sigma\sqrt{2 \log(n)}$, with $\phi > 0$. We will show that in this case \eqref{eq:typical_clique_number_half_normal_general} simplifies to the result in Table~\ref{tbl:typical_clique_number_light_tailed1}. This gives
\begin{align}
\label{eq:typical_clique_number_half_normal1}
\tcn
  &= \frac{2\log(n)-4\log((1+\phi)\sqrt{2 \log(n)})+4-\log(\pi)}{-\lambertW_{-1}\left(-\frac{2\log(n)-4\log((1+\phi)\sqrt{2 \log(n)})+4-\log(\pi)}{2e \, (1+\phi)^2 \log(n)}\right)} + 3 + \smallO(1)\\
  &= \frac{2\log(n)-2\log\log(n)-2\log(\sqrt{4 \pi} (1+\phi)^2 / e^2)}{-\lambertW_{-1}\left(-\frac{1}{e (1+\phi)^2}\right) + \smallO(1)} + 3 + \smallO(1)\\
  &= (1 + \smallO(1))\,\frac{2\log(n)}{-\lambertW_{-1}\left(-\frac{1}{e (1+\phi)^2}\right)}\,.
\end{align}

\subsubsection{\texorpdfstring{Second scaling: $\sf = \sigma \sqrt{2 \log(n)}^{1+\phi}$}{Second scaling}}
\label{subsubsec:half_normal_second_scaling}
Let $\sf = \sigma \sqrt{2 \log(n)}^{1+\phi}$, with $\phi > 0$. We will show that in this case \eqref{eq:typical_clique_number_half_normal_general} simplifies to the result in Table~\ref{tbl:typical_clique_number_light_tailed2}. This gives
\begin{align}
\label{eq:typical_clique_number_half_normal2}
\tcn
  &= \frac{2\log(n)-2(1+\phi)\log(2\log(n))+4-\log(\pi)}{-\lambertW_{-1}\left(-\frac{2\log(n)-2(1+\phi)\log(2\log(n))+4-\log(\pi)}{e(2\log(n))^{1+\phi}}\right)} + 3 + \smallO(1)\\
  &= \frac{2\log(n)-2(1+\phi)\log(2\log(n))+4-\log(\pi)}{-\lambertW_{-1}\left(-\frac{1}{e(2\log(n))^{\phi}}\right) + \smallO(1)} + 3 + \smallO(1)\\
  &= (1 + \smallO(1)) \, \frac{2\log(n)}{\log\left(e(2\log(n))^{\phi}\right)}
  = (1 + \smallO(1)) \, \frac{2}{\phi} \, \frac{\log(n)}{\log\log(n)}\,.
\end{align}
At this stage, we have not yet shown that the assumption in \eqref{eq:assumption_rm_half_normal} holds. This can be done using a similar reasoning as for the Gamma distribution, and we omit the details
for brevity.

\subsection{Log-normal weights}
\label{subsec:log-normal_weights}
Let $\w$ have a log-normal distribution with parameters $\mu = 0$ and $\sigma = 1$, that is $\w \sim \exp(X)$, where $X$ is standard normal. Then one can show that the relative moments from Definition~\ref{def:relative_moments} are given by
\begin{equation}
\label{eq:assumption_rm_log_normal}
\frac{\log(\rm{r-1})}{\log(\sf / \E[\wt])}
  = \frac{\frac{1}{2} (r-1)(r-2)}{\log(\sf / \E[\wt])} + \smallO(1) \,.
\end{equation}
provided that $r \leq \tcn$. For brevity of presentation, we omit the details of this derivation.

Using this in Definition~\ref{def:typical_clique_number}, the typical clique number $\tcn$ is the solution in $r$ of
\begin{equation}
r = \frac{\log(n) - \log(r) + \frac{1}{2}(r-1)(r-2) + 1}{\log\bigl(\sf / \sqrt{e}\bigr)} + 1 + \smallO(1)\,.
\end{equation}
To solve this, we bound the solution with the following two bounds
\begin{align}
r &\leq \frac{\log(n) + \frac{1}{2}(r-1)(r-2) + 1}{\log\bigl(\sf / \sqrt{e}\bigr)} + 1 + \smallO(1)\,,\\
r &\geq \frac{\log(n) - r + \frac{1}{2}(r-1)(r-2) + 1}{\log\bigl(\sf / \sqrt{e}\bigr)} + 1 + \smallO(1)\,.
\end{align}
Solving the above gives
\begin{align}
\label{eq:typical_clique_number_upper_bound_log_normal}
\tcn &\leq \log(\sf) - \sqrt{\log(\sf)^2-2(\log(n)+1)} + 1 + \smallO(1)\,,\\
\label{eq:typical_clique_number_lower_bound_log_normal}
\tcn &\geq \log(\sf) - \sqrt{(1 + \log(\sf))^2-2\log(n)} + 2 + \smallO(1)\,.
\end{align}
Combining \eqref{eq:typical_clique_number_upper_bound_log_normal} and \eqref{eq:typical_clique_number_lower_bound_log_normal}, and plugging in $\sf = (1+\phi)\exp\bigl(\sqrt{2\log(n)}\bigr)$, we obtain the result in Table~\ref{tbl:typical_clique_number_light_tailed1}. Similarly, the result in Table~\ref{tbl:typical_clique_number_light_tailed2} is obtained by plugging in $\sf = \exp\bigl(\sqrt{2\log(n)}\bigr)^{1+\phi}$.

\paragraph{Acknowledgements.}
The work of RvdH is supported by NWO VICI grant 639.033.806 and NWO Gravitation Networks grant 024.002.003.

\printbibliography[title={References}]

\end{document}